\documentclass[12pt]{article}
\usepackage{graphicx}
\usepackage[a4paper, total={7in, 8in}]{geometry}
\usepackage{amssymb,amsmath,amsthm,amsfonts}
\makeatletter
\makeatother
\usepackage{mathrsfs}
\usepackage{enumitem}
\usepackage{indentfirst}
\usepackage{float}
\usepackage{listings}
\usepackage{matlab-prettifier}
\usepackage{subfig}
\usepackage{diagbox}
\usepackage{bm}
\usepackage[vlined]{algorithm2e}
\newcommand{\alignedintertext}[1]{%
  \noalign{%
    \vskip\belowdisplayshortskip
    \vtop{\hsize=\linewidth#1\par
    \expandafter}%
    \expandafter\prevdepth\the\prevdepth
  }%
}
\newcommand\ovset[2]{\genfrac{}{}{0pt}{}{#1}{#2}}
\newcommand{\att}{\mathcal{A}_\mathcal{S}}

\newcommand{\lip}{\text{Lip}}
\theoremstyle{definition}
\newtheorem{definition}{Definition}[section]
\theoremstyle{plain}
\newtheorem{theorem}{Theorem}[section]
\theoremstyle{definition}
\newtheorem{remark}{Remark}[section]
\theoremstyle{definition}

\theoremstyle{definition}

\theoremstyle{plain}
\newtheorem{proposition}{Proposition}[section]
\numberwithin{equation}{section}
\usepackage{aligned-overset}

\begin{document}
\author{Bogdan Anghelina and
    Radu Miculescu
}
\date{}
\title{Covers of fractal interpolation surfaces \\ with finite families of octahedrons}
\maketitle

\textbf{Abstract.} In our previous work, \textit{On the localization of Hutchinson-Barnsley fractals}, Chaos Solitons Fractals, 173 (2023), 113-674, we presented a method for finding a finite family of closed balls whose union contains the attractor of a given iterated function system. In this paper, for the particular framework of fractal interpolation surfaces, we provide an improved version of it. This approach is more efficient, from the computational point of view, as it is based on finding the maximum of certain sets, in contrast to the previous method which uses a sorting algorithm.\\

\textbf{Keywords:} fractal interpolation surfaces, octahedrons, covers
\\

\textbf{MSC}: 28A80, 41A05
\section{Introduction}
The fractal interpolation method is due to M. Barnsley. It is appropriate for modelling rough and irregular curves and surfaces, having applications in those areas in which there is a need to construct very complicated objects like computer graphic, metallurgy, geology, seismology etc.

P. Massopust (see \cite{massopus1}) constructed fractal interpolation surfaces over
triangular regions provided that the interpolation points of the boundary of
the region are coplanar. For connected results see \cite{liang}.

J. Geronimo and D. Hardin (see \cite{geronimo}) presented algorithms for the construction
of fractal interpolation surfaces over polygonal regions with arbitrary
interpolation points. For a generalization see \cite{zhao}.

D. Hardin and P. Massopust (see \cite{hardin}) investigated $\mathbb{R}^{m}$-valued
multivariable fractal functions.

L. Dalla (see \cite{dallas}) and H. Xie and H. Sun (see \cite{xie1}) exposed a construction of a
bivariate fractal interpolation function whose domain is a rectangle $%
    \mathbb{R}^{2}$ and the interpolation points on each of its edge are
collinear. For some generalizations of these works see \cite{feng} and \cite{malysz}.

A nice method (based on the construction of fractal interpolation functions)
to obtain fractal interpolation surfaces can be found in
    \cite{bouboulis2}. For a generalization see \cite{yun}.

For extra works dealing with generalizations of fractal interpolation
surfaces see \cite{bouboulis1}, \cite{chand1}, \cite{chand2}, \cite{drakopoulos}, \cite{feng2}, \cite{liang2}, 
        \cite{metzler}, \cite{ri1}, \cite{ri2} and \cite{ruan}.

Fractal interpolation surfaces were used to approximate surfaces of rocks,
metals (\cite{xie2}), terrains \cite{yokoya}, planets \cite{cambell} and to
compress images \cite{bouboulis3}.

In \cite{locHBfrct} we presented a method for finding a finite family of closed balls whose union contains the attractor of a given iterated functions system, and we explained its relevance. More precisely, if \(S=((X,d), (f_i)_{i\in I})\) is such a system consisting of at least two functions and \(\rho_i\in [0,\infty)\), \(i\in I\), represent a solution of the system of equations
\[\rho_i = \lip(f_i)(\max_{i,j\in I}d(e_i,e_j)+\max_{j\in I\setminus\{i\}}\rho_j)\tag{\(\star\)},\] where \(e_i\) is the unique fixed point of \(f_i\), then \[\att \subseteq \bigcup _{i\in I}B[e_i,\rho_i],\]
where \(\att\) designates the attractor of \(\mathcal{S}\).

Let us recall that in order to obtain a solution of \((\star)\) we relabeled the functions \(f_i\) such that \(\lip(f_{i_1})\leq \lip(f_{i_2}) \leq \dots \leq \lip(f_{i_n})\), i.e. we ordered the set \(\{\lip(f_i)|i\in I\}\), where \(I=\{i_1,i_2,\dots , i_n\}\). From the computational point of view this approach is not very efficient.

In this paper we provide an improved version of the above-mentioned method for the particular framework of fractal interpolation surfaces. One inconvenience that occurs in this case is the difficulty to find a concrete form of \(\lip(f_i)\) in terms of the given data set. As, in view of the work presented in \cite{dallas}, some constants \(c_i\in [0,1)\), having the property that \(d(f_i(x),f_i(y))\leq c_i d(x,y)\) for all \(x,y\in X\), are available, we remark that our previously mentioned result from \cite{locHBfrct} is still valid if one replaces \(\lip(f_i)\) with \(c_i\).

Moreover, we find a solution of the system \((\star)\) via a more efficient approach, which is based on determining \(\displaystyle\max_{i\in I}c_i\) and \(\displaystyle\max_{{j\in I}, \;{c_j\neq\max_{i\in I}c_i}} c_j\) rather than on the ordering of \(\{\lip(f_i)|i\in I\}\).

Some graphical representations for such covers of fractal interpolation surfaces are given.
\section{Preliminaries}

Given a metric space \((X,d)\), \(x\in X\) and \(r>0\) we shall use the following notation:

\begin{itemize}
    \item \(B[x,r]=\{y\in X \;|\; d(y,x)\leq r\}\)

    \item \(P_{cp}(X)=\{A\subseteq X| A \text{ is non-empty and compact}\}\)
    \item \(h\) is the Hausdorff-Pompeiu metric.

\end{itemize}

For a Lipschitz function \(f:X \to X\) we shall denote by \(\lip(f)\) the Lipschitz constant of \(f\).
\\

\begin{definition}[]\label{fractaldef} An iterated function system (for short IFS) is a pair \(\mathcal{S}=\textbf{(}(X,d),(f_i)_{i\in I}\textbf{)}\), where \((X,d)\) is a complete metric space and \(f_i:X\to X\), \(i\in I\), are Banach contractions.

    The function \(F_\mathcal{S}:P_{cp}(X)\to P_{cp}(X)\), given by \[F_\mathcal{S}(K)=\bigcup_{i\in I}f_i(K),\] for every \(K\in P_{cp}(X)\), is called the fractal operator (or the Hutchinson operator) associated with \(\mathcal{S}\).

\end{definition}
\begin{proposition}\label{2:picard} If \(\mathcal{S}=\boldsymbol{(}(X,d),(f_i)_{i\in I}\boldsymbol{)}\) is an iterated function system, then \(F_\mathcal{S}\) is a contraction with respect to \(h\). Its unique fixed point is denoted by \(\att\) and it is called the attractor of \(\mathcal{S}\) since
    \[\lim_{n\to\infty} \underbrace{(F_\mathcal{S}\circ F_\mathcal{S}\circ\dots\circ F_\mathcal{S})}_{n \text{ times }}(K) = \att,\]
    for each \(K\in P_{cp}(X).\)
\end{proposition}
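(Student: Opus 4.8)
The plan is to recognise $F_\mathcal{S}$ as a contraction of the complete metric space $\bigl(P_{cp}(X),h\bigr)$ and then to invoke the Banach fixed point theorem; the contraction statement is the first assertion, while the existence and uniqueness of the fixed point together with the convergence of the Picard iterates are the remaining ones.

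First I would check that $F_\mathcal{S}$ is well defined, i.e. that $F_\mathcal{S}(K)\in P_{cp}(X)$ whenever $K\in P_{cp}(X)$: each $f_i$ is a Banach contraction, hence continuous, so $f_i(K)$ is non-empty and compact, and since $I$ is finite the union $\bigcup_{i\in I}f_i(K)$ is again non-empty and compact.

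The core step is to show that $F_\mathcal{S}$ is a contraction with ratio $c:=\max_{i\in I}\lip(f_i)<1$. This rests on two standard properties of the Hausdorff--Pompeiu metric, both obtained by manipulating the excess functional $e(A,B)=\sup_{a\in A}d(a,B)$: first, $h\bigl(f(A),f(B)\bigr)\le\lip(f)\,h(A,B)$ for a Lipschitz map $f$; second, $h\bigl(\bigcup_{i\in I}A_i,\bigcup_{i\in I}B_i\bigr)\le\max_{i\in I}h(A_i,B_i)$. Combining them gives, for all $K,L\in P_{cp}(X)$,
\[
h\bigl(F_\mathcal{S}(K),F_\mathcal{S}(L)\bigr)=h\Bigl(\textstyle\bigcup_{i\in I}f_i(K),\,\bigcup_{i\in I}f_i(L)\Bigr)\le\max_{i\in I}h\bigl(f_i(K),f_i(L)\bigr)\le c\,h(K,L).
\]
It then remains to recall the classical fact that $\bigl(P_{cp}(X),h\bigr)$ is complete because $(X,d)$ is complete, so that the Banach fixed point theorem applies and produces a unique $\att\in P_{cp}(X)$ with $F_\mathcal{S}(\att)=\att$, together with $\lim_{n\to\infty}(\,\underbrace{F_\mathcal{S}\circ\cdots\circ F_\mathcal{S}}_{n\text{ times}}\,)(K)=\att$ for every $K\in P_{cp}(X)$, which is precisely the displayed limit.

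I expect the only genuine work to be the two auxiliary properties of $h$ and the completeness of $\bigl(P_{cp}(X),h\bigr)$ rather than any single inequality; since these are well known (they go back to Hutchinson's paper and appear in Barnsley's book), in the final write-up I would either cite a standard reference or record short self-contained proofs in the preliminaries, keeping the three steps above as the backbone of the argument.
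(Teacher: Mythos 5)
Your proposal is correct and is the standard Hutchinson argument: verify that $F_\mathcal{S}$ preserves $P_{cp}(X)$, establish the contraction estimate $h(F_\mathcal{S}(K),F_\mathcal{S}(L))\le \bigl(\max_{i\in I}\lip(f_i)\bigr)\,h(K,L)$ from the two auxiliary properties of the Hausdorff--Pompeiu metric, and then invoke Banach's fixed point theorem on the complete space $(P_{cp}(X),h)$. The paper itself does not supply a proof of this proposition (it is stated as a preliminary, classical fact), so there is no alternative route to compare against; your write-up fills that gap along the expected lines. One small point worth making explicit in a final version: the well-definedness step uses that the index set $I$ is finite (so a finite union of compact sets is compact), and the contraction ratio $c=\max_{i\in I}\lip(f_i)$ is strictly less than $1$ again because the maximum is over a finite set of numbers each below $1$; this is consistent with the paper, where $I=\{1,\dots,n\}$ throughout, but Definition~\ref{fractaldef} as written does not say so, and it is worth noting the hypothesis.
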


\section{Covers with a finite family of closed balls of the attractor of an IFS}
We shall consider
\begin{itemize}
    \item \(\mathcal{S}=((X,d),(f_i)_{i\in \{1,\dots, n\}})\) an iterated function system consisting of contractions, with \(n\geq 2\)
    \item  \(\gamma_i\) the unique fixed point of \(f_i\), where \(i\in \{1,\dots, n\}\)
    \item \(c_i\in[0,1)\) such that \(d(f_i(x),f_i(x))\leq c_i d(x,y)\) for all \(x,y \in X\), where \(i\in \{1,\dots, n\}\)
    \item \(i'\in\{1,\dots, n\}\) such that \(c_{i'} = \max \{c_1, \dots, c_n\}\)
    \item \(i''\in\{1,\dots,n\}\setminus\{i'\}\) such that \(c_{i''}= \max\{c_i\;|\;i\in\{1,\dots,n\}\setminus\{i'\}\}\)
          \item\(M= \max_{i,j\in \{1,\dots, n\}} d(\gamma_i,\gamma_j)\).
    \item The system \(\mathfrak{S}\), with the unknowns \(\rho_1,\dots \rho_n\), consisting on the following \(n\) equations \[\rho_i=c_i\left(M+\max_{j\in \{1,\dots, n\}\setminus\{i\}}\rho_j\right),\] where \(i\in\{1,\dots,n\}\).
\end{itemize}

The arguments used on the proof of Proposition 3.3 from \cite{locHBfrct} ensure the validity of the following result:
\begin{proposition}\label{prop1}
    In the above-mentioned framework, \[\att \subseteq \bigcup_{i\in\{1,\dots, n\}} B[\gamma_i,\rho_i],\] for each solution \(\rho_1,\dots ,\rho_n\) of the system \(\mathfrak{S}\).
\end{proposition}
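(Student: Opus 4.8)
The plan is to use, for a fixed solution $(\rho_1,\dots,\rho_n)$ of $\mathfrak{S}$, the set $K:=\bigcup_{i\in\{1,\dots,n\}}B[\gamma_i,\rho_i]$ as an ``absorbing region'' for the fractal operator, and then to transport the inclusion $\att\subseteq K$ out of it by iterating $F_\mathcal{S}$ and appealing to Proposition \ref{2:picard}. First I would record that any solution of $\mathfrak{S}$ has non-negative components: if $\ell$ is an index minimizing $\rho_\ell$, then $\max_{k\neq\ell}\rho_k\geq\rho_\ell$, so the $\ell$-th equation gives $\rho_\ell\geq c_\ell(M+\rho_\ell)$, i.e. $(1-c_\ell)\rho_\ell\geq c_\ell M\geq 0$ and hence $\rho_\ell\geq 0$; in particular every $B[\gamma_i,\rho_i]$ is non-empty and $K$ is a non-empty closed set.

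The heart of the argument is the invariance $f_i(K)\subseteq K$ for each $i\in\{1,\dots,n\}$. To prove it I would fix $i$, take $x\in K$, and pick $j$ with $d(x,\gamma_j)\leq\rho_j$. Since $f_i(\gamma_i)=\gamma_i$, the contraction estimate gives $d(f_i(x),\gamma_i)=d(f_i(x),f_i(\gamma_i))\leq c_i\,d(x,\gamma_i)$. If $j=i$, then $d(f_i(x),\gamma_i)\leq c_i\rho_i\leq\rho_i$ because $c_i\in[0,1)$. If $j\neq i$ — a case that can occur only thanks to $n\geq 2$ — the triangle inequality together with the definition of $M$ yields $d(x,\gamma_i)\leq d(x,\gamma_j)+d(\gamma_j,\gamma_i)\leq\rho_j+M\leq\max_{k\in\{1,\dots,n\}\setminus\{i\}}\rho_k+M$, so that $d(f_i(x),\gamma_i)\leq c_i\big(M+\max_{k\neq i}\rho_k\big)=\rho_i$ by the $i$-th equation of $\mathfrak{S}$. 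In either case $f_i(x)\in B[\gamma_i,\rho_i]\subseteq K$.

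It then remains to pass from $K$ to $\att$. Writing $F_\mathcal{S}^{m}$ for the $m$-fold composition of $F_\mathcal{S}$ with itself, note that $\{\gamma_1\}\in P_{cp}(X)$ is contained in $K$ and that the invariance just proved gives $F_\mathcal{S}(A)=\bigcup_i f_i(A)\subseteq K$ whenever $A\subseteq K$; an immediate induction then yields $F_\mathcal{S}^{m}(\{\gamma_1\})\subseteq K$ for every $m\in\mathbb{N}$. By Proposition \ref{2:picard}, $F_\mathcal{S}^{m}(\{\gamma_1\})\to\att$ in the Hausdorff--Pompeiu metric $h$, so for each $a\in\att$ and each $m$ one has $d(a,K)\leq d\big(a,F_\mathcal{S}^{m}(\{\gamma_1\})\big)\leq h\big(F_\mathcal{S}^{m}(\{\gamma_1\}),\att\big)$, and the right-hand side tends to $0$. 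Since $K$ is closed, $a\in K$; hence $\att\subseteq K$, as claimed.

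The step I expect to be the main obstacle — and the reason one cannot simply assert $F_\mathcal{S}(K)\subseteq K$ directly — is that in an abstract complete metric space closed balls, hence $K$, need not be compact, so $K$ is not itself an admissible input for $F_\mathcal{S}:P_{cp}(X)\to P_{cp}(X)$. Running the induction from the compact set $\{\gamma_1\}$ — any finite subset of $K$, such as $\{\gamma_1,\dots,\gamma_n\}$, would serve equally well — is exactly what circumvents this; what remains beyond it is only the routine estimate of the previous paragraph and the elementary fact that a Hausdorff--Pompeiu limit of subsets of a closed set is itself contained in that set.
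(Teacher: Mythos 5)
Your proof is correct, and it follows what is almost certainly the same line of argument that the paper invokes by reference to Proposition 3.3 of the cited earlier work: establish that the finite union of closed balls $K=\bigcup_i B[\gamma_i,\rho_i]$ is invariant under each $f_i$ (using the triangle inequality, the definition of $M$, and the $i$-th equation of $\mathfrak{S}$), then iterate the Hutchinson operator from a compact subset of $K$ and pass to the limit $\att$ via Proposition \ref{2:picard} and the closedness of $K$. Your preliminary observation that any real solution of $\mathfrak{S}$ has non-negative components, and your explicit handling of the fact that $K$ itself need not be compact in a general complete metric space (hence the iteration must start from, say, $\{\gamma_1\}$ rather than $K$), are both accurate and fill in details one would want to see.
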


\begin{proposition}\label{prop2}
    In the above-mentioned framework,
    \[\rho_{i'}=Mc_{i'}\frac{1+c_{i''}}{1-c_{i'}c_{i''}}\]
    and
    \[\rho_{i}=Mc_{i}\frac{1+c_{i'}}{1-c_{i'}c_{i''}},\]
    for all \(i\in\{1,\dots,n\}\setminus\{i'\}\), represent a solution of the system \(\mathfrak{S}\).
\end{proposition}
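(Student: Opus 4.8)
The plan is to verify directly that the proposed values satisfy each of the $n$ equations of $\mathfrak{S}$, splitting the check into two cases according to whether the index equals $i'$ or not. The crucial preliminary observation is that the proposed solution has a very simple structure: every $\rho_i$ with $i \neq i'$ is a fixed positive multiple of $c_i$, namely $\rho_i = \lambda c_i$ with $\lambda = M\frac{1+c_{i'}}{1-c_{i'}c_{i''}}$, while $\rho_{i'} = Mc_{i'}\frac{1+c_{i''}}{1-c_{i'}c_{i''}}$. Since $c_{i'} = \max_k c_k$ and $c_{i''}$ is the maximum of the remaining $c_k$, one has $c_i \le c_{i''} \le c_{i'}$ for every $i \in \{1,\dots,n\}\setminus\{i'\}$, which means $\rho_i \le \rho_{i''} = \lambda c_{i''}$ for all such $i$, and one must also check $\rho_{i'} \ge \rho_{i''}$ so that $\rho_{i'}$ is the overall maximum. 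This last inequality reduces, after clearing the (positive) common denominator, to $c_{i'}(1+c_{i''}) \ge c_{i''}(1+c_{i'})$, i.e. $c_{i'} \ge c_{i''}$, which holds by definition.

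First I would record the two ``maximum'' identities that the equations of $\mathfrak{S}$ demand: for the equation indexed by $i'$ we need $\max_{j \neq i'} \rho_j = \rho_{i''}$, and for an equation indexed by any $i \neq i'$ we need $\max_{j \neq i} \rho_j = \rho_{i'}$. Both follow from the ordering established above: $\rho_{i'}$ is the largest of all the $\rho$'s, and among $\{\rho_j : j \neq i'\}$ the largest is $\rho_{i''}$ because $c_{i''}$ is the largest among $\{c_j : j \neq i'\}$ and $\rho_j = \lambda c_j$ there. (If several indices tie for the maximum $c$-value the argument is unaffected, since ties only produce equal $\rho$-values.)

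Next I would substitute. For $i \neq i'$, the right-hand side of the corresponding equation becomes
\[
c_i\left(M + \rho_{i'}\right) = c_i\left(M + Mc_{i'}\frac{1+c_{i''}}{1-c_{i'}c_{i''}}\right) = c_i M \cdot \frac{(1-c_{i'}c_{i''}) + c_{i'}(1+c_{i''})}{1-c_{i'}c_{i''}} = c_i M\frac{1+c_{i'}}{1-c_{i'}c_{i''}},
\]
which is exactly $\rho_i$. For the index $i'$, the right-hand side is
\[
c_{i'}\left(M + \rho_{i''}\right) = c_{i'}\left(M + Mc_{i''}\frac{1+c_{i'}}{1-c_{i'}c_{i''}}\right) = c_{i'} M\frac{(1-c_{i'}c_{i''}) + c_{i''}(1+c_{i'})}{1-c_{i'}c_{i''}} = c_{i'} M\frac{1+c_{i''}}{1-c_{i'}c_{i''}} = \rho_{i'}.
\]
Thus all $n$ equations hold. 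Note that the denominator $1 - c_{i'}c_{i''}$ is strictly positive since $c_{i'}, c_{i''} \in [0,1)$, so every expression above is well defined and each $\rho_i$ is a genuine nonnegative real number.

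The only real subtlety — and the step I would flag as the main obstacle — is justifying the two ``$\max$'' evaluations rigorously, since the definition of the solution is ``self-referential'': the value $\rho_{i'}$ on the right-hand side of the equations for $i \neq i'$ is only the correct substitution once we know $\rho_{i'}$ is the maximum of the competing $\rho_j$'s, and likewise for $\rho_{i''}$. This is precisely why one needs the ordering $c_i \le c_{i''} \le c_{i'}$ together with the monotone dependence $\rho_j = \lambda c_j$ (for $j \neq i'$) and the comparison $\rho_{i'} \ge \rho_{i''}$; once these are in place the substitution is unambiguous and the verification is the routine algebra displayed above. I would therefore present the ordering lemma first and only then perform the substitution.
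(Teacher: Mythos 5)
Your proof is correct and follows essentially the same route as the paper: establish the ordering $\rho_{i'} \ge \rho_{i''} \ge \rho_i$ (the paper calls this the ``Claim''), use it to evaluate the two maxima $\max_{j\neq i'}\rho_j = \rho_{i''}$ and $\max_{j\neq i}\rho_j = \rho_{i'}$, and then verify the equations by the same algebraic simplification. Your framing of the off-$i'$ values as $\rho_i = \lambda c_i$ with a common positive $\lambda$ is a slightly tidier way to see the monotonicity, but the underlying argument is identical to the paper's.
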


\begin{proof}
    \item

    \textbf{Claim.} \[\rho_{i'}\geq \rho_{i''}\geq \rho_i, \] for all \(i\in\{1,\dots,n\}\setminus\{i'\}\).

    Justification of the Claim.
    We have
    \[c_{i''}(1+c_{i'})=c_{i''}+c_{i'}c_{i''}\leq c_{i'}+c_{i'}c_{i''}= c_{i'}(1+c_{i''}),\]
    so
    \[\rho_{i''}=Mc_{i''}\frac{1+c_{i'}}{1-c_{i'}c_{i''}} \leq Mc_{i'}\frac{1+c_{i''}}{1-c_{i'}c_{i''}} =\rho_{i'}.\]

    Since \(c_{i''}\geq c_{i}\) for all \(i\in\{1,\dots ,n\}\setminus\{i'\}\), we have
    \[\rho_{i}=Mc_{i}\frac{1+c_{i'}}{1-c_{i'}c_{i''}} \leq Mc_{i''}\frac{1+c_{i'}}{1-c_{i'}c_{i''}}=\rho_{i''}  ,\]
    for all \(i\in \{1,\dots,n\} \setminus \{i'\}\).
    Hence, the justification of the Claim is complete.

    Therefore, taking into account the previous Claim, we get
    
    \begin{equation}\tag{1}\max_{j\in\{1,\dots, n\}\setminus\{i'\}} \rho_j = \rho_{i''}.\end{equation}
    and
    \begin{equation}\tag{2}\max_{j\in\{1,\dots, n\}\setminus\{i\}} \rho_j = \rho_{i'},\end{equation} for all \(i\in\{1,\dots,n\}\setminus\{i'\} \).

    We have
    \begin{align*}
        \rho_{i'} & = Mc_{i'}\frac{1+c_{i''}}{1-c_{i'}c_{i''}}=Mc_{i'}\frac{1-c_{i'}c_{i''}+c_{i'}c_{i''}+c_{i''}}{1-c_{i'}c_{i''}}=c_{i'}\left(M+Mc_{i''}\frac{1+c_{i'}}{1-c_{i'}c_{i''}}\right) \\&=c_{i'}(M+\rho_{i''})\overset{(1)}{=}c_{i'}(M+\max_{j\in\{1,\dots, n\}\setminus\{i'\}} \rho_j)
    \end{align*}
    and
    \begin{align*}\rho_i &= Mc_{i}\frac{1+c_{i'}}{1-c_{i'}c_{i''}}=Mc_{i}\frac{1-c_{i'}c_{i''}+c_{i'}c_{i''}+c_{i'}}{1-c_{i'}c_{i''}}=c_{i}\left(M+Mc_{i'}\frac{1+c_{i''}}{1-c_{i'}c_{i''}}\right)\\&=c_i(M+\rho_{i'})\overset{(2)}{=}c_i(M+\max_{j\in\{1,\dots, n\}\setminus\{i\}} \rho_j),\end{align*}
    for all \(i\in\{1,\dots,n\}\setminus\{i'\}\).

    Hence, the above-mentioned values of \(\rho_i\) give a solution of the system \(\mathfrak{S}\).
\end{proof}
Combining Propositions \ref{prop1} and \ref{prop2} we get:
\begin{theorem}
    In the above-mentioned framework, we have
    \[\att\subseteq \left(\bigcup_{i\in\{1,\dots,n\}\setminus\{i'\}}B\left[\gamma_{i},Mc_{i}\frac{1+c_{i'}}{1-c_{i'}c_{i''}}\right]\right)\bigcup B\left[\gamma_{i'},Mc_{i'}\frac{1+c_{i''}}{1-c_{i'}c_{i''}}\right].\]
\end{theorem}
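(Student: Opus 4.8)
The plan is straightforward: this theorem is simply the conjunction of the two preceding propositions, so the proof is one line of bookkeeping. First I would invoke Proposition \ref{prop2} to assert that the explicit values
\[
\rho_{i'} = Mc_{i'}\frac{1+c_{i''}}{1-c_{i'}c_{i''}}, \qquad \rho_i = Mc_i\frac{1+c_{i'}}{1-c_{i'}c_{i''}} \ \ (i\neq i'),
\]
do indeed form a solution of the system $\mathfrak{S}$. Then I would feed this particular solution into Proposition \ref{prop1}, which guarantees that $\att \subseteq \bigcup_{i\in\{1,\dots,n\}} B[\gamma_i,\rho_i]$ for \emph{every} solution of $\mathfrak{S}$, in particular for this one.

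The only remaining task is cosmetic: split the index set $\{1,\dots,n\}$ into $\{i'\}$ and its complement $\{1,\dots,n\}\setminus\{i'\}$ so that the union of balls is written in the two-piece form displayed in the statement, substituting the closed-form expression for each radius. That is,
\[
\bigcup_{i\in\{1,\dots,n\}} B[\gamma_i,\rho_i] = \left(\bigcup_{i\in\{1,\dots,n\}\setminus\{i'\}} B\!\left[\gamma_i, Mc_i\frac{1+c_{i'}}{1-c_{i'}c_{i''}}\right]\right)\bigcup B\!\left[\gamma_{i'}, Mc_{i'}\frac{1+c_{i''}}{1-c_{i'}c_{i''}}\right].
\]
No genuine obstacle arises here; the substantive work was already carried out in verifying that the given radii solve $\mathfrak{S}$ (done in the proof of Proposition \ref{prop2}, via the Claim that $\rho_{i'}\geq\rho_{i''}\geq\rho_i$ which identifies the relevant maxima) and in establishing Proposition \ref{prop1} (borrowed from \cite{locHBfrct}). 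If I wanted to make the proof self-contained rather than a two-line citation, the one point worth spelling out is \emph{why} $1-c_{i'}c_{i''}>0$, so that the radii are well-defined nonnegative reals: this follows immediately from $c_{i'},c_{i''}\in[0,1)$, whence $c_{i'}c_{i''}<1$. With that observation in place, the theorem follows by directly combining the two propositions, and I would simply write ``Combining Propositions \ref{prop1} and \ref{prop2} yields the conclusion.''
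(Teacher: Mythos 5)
Your proposal matches the paper exactly: the paper offers no separate proof of the theorem, simply prefacing it with ``Combining Propositions \ref{prop1} and \ref{prop2} we get:''. Your additional observation that $1-c_{i'}c_{i''}>0$ is correct and harmless, but the substance and route are identical.
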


\begin{remark}For \(p\in\mathbb{N}\), let us consider the iterated function system
    \[\mathcal{S}_p =((X,d),(f_{i_1}\circ\dots\circ f_{i_p})_{i_1,\dots,i_p\in\{1,\dots,n\}}).\]

    Note that :
    \begin{enumerate}
        \item[(\(\alpha\))] (see Proposition 3.4 from \cite{locHBfrct}) \[\mathcal{A}_{\mathcal{S}_p}=\att.\]
        \item[(\(\beta\))] \[d((f_{i_1}\circ\dots\circ f_{i_p})(x),(f_{i_1}\circ\dots\circ f_{i_p})(y))\leq c_{i_1}\cdot\ldots\cdot c_{i_p}d(x,y),\]
            for all \(x,y\in X\) and \(i_1,\dots,i_p\in\{1,\dots,n\}\).
    \end{enumerate}

    Since \[c_{i_1}\cdot \ldots \cdot c_{i_p}\leq c_{i'}^p,\]
    and \[\lim_{p\to\infty} c_{i'}^p=0,\]
    via \(\alpha)\) and Theorem 3.1, we infer that, by increasing \(p\), \(\att\) can be covered with a finite family of closed balls having the radii as small as we want.
\end{remark}
\section{Fractal interpolation surfaces}

In this section, following \cite{dallas}, we present the basic facts concerning fractal interpolation surfaces.

Let \(I=[a,b], J=[c,d]\) and the data set
\(\mathcal{D}=\{(x_k,y_l,z_{k,l})\in\mathbb{R}^3|k\in\{0,1,\dots,n\},l\in\{0,1,\dots,m\}\}\) be such that

\[a= x_0< x_1<\dots<x_{n-1}<x_n=b,\]
\[c=y_0<y_1<\dots<y_{m-1}<y_m=d,\]
and each of the sets

\begin{align*}\tag{\(*\)}\label{collin}
     & \{(x_0, y_l, z_{0, l})|l\in\{0, 1, \dots, m\}\},  \\
     & \{(x_n, y_l, z_{n, l})|l\in\{0, 1, \dots, m\}\},  \\
     & \{(x_k, y_0, z_{k, 0})| k\in \{0, 1, \dots, n\}\}
    \intertext{and}
     & \{(x_k, y_m, z_{k, m})| k\in \{0,1,\dots,n\}\}\end{align*} consists of collinear points.

For \(k\in\{1,\dots,n\}\) and \(l\in\{1,\dots,m\}\) we define the function \(F_{k,l}:I\times J\times\mathbb{R}\to I\times J\times\mathbb{R}\), by

\[F_{k,l}(x,y,z) = (a_kx+b_k,c_ly+d_l,e_{k,l}x+f_{k,l}y+g_{k,l}z+\alpha_{k,l}xy+\beta_{k,l}),\] for all \((x,y,z)\in I\times J \times \mathbb{R}\),
where the coefficients \(a_k,b_k,c_l,d_l,f_{k,l},\alpha_{k,l}\) and \(\beta_{k,l}\) are chosen such that:

\[F_{k,l}(x_0,y_0,z_{0,0}) = (x_{k-1},y_{l-1},z_{k-1,l-1}),\]
\[F_{k,l}(x_n,y_0,z_{n,0}) = (x_{k},y_{l-1},z_{k,l-1}),\]
\[F_{k,l}(x_0,y_m,z_{0,m}) = (x_{k-1},y_{l},z_{k-1,l})\]
and
\[F_{k,l}(x_n,y_m,z_{n,m}) = (x_{k},y_{l},z_{k,l}).\]

Equivalently, we obtain the following systems of equations

\[\begin{cases}
        a_kx_0+b_k                                                          & =x_{k-1}     \\
        c_ly_0+d_l                                                          & =y_{l-1}     \\
        e_{k,l}x_0+f_{k,l}y_0+g_{k,l}z_{0,0}+\alpha_{k,l}x_0y_0+\beta_{k,l} & =z_{k-1,l-1}
    \end{cases},\]
\[\begin{cases}
        a_kx_n+b_k                                                          & =x_{k}     \\
        c_ly_0+d_l                                                          & =y_{l-1}   \\
        e_{k,l}x_n+f_{k,l}y_0+g_{k,l}z_{n,0}+\alpha_{k,l}x_ny_0+\beta_{k,l} & =z_{k,l-1}
    \end{cases},\]
\[\begin{cases}
        a_kx_0+b_k                                                         & =x_{k-1}   \\
        c_ly_m+d_l                                                         & =y_{l}     \\
        e_{k,l}x_0+f_{k,l}y_m+g_{k,l}z_{0,m}+\alpha_{kl}x_0y_m+\beta_{k,l} & =z_{k-1,l}
    \end{cases}\]
and
\[\begin{cases}
        a_kx_n+b_k                                                          & =x_{k}   \\
        c_ly_m+d_l                                                          & =y_{l}   \\
        e_{k,l}x_n+f_{k,l}y_m+g_{k,l}z_{n,m}+\alpha_{k,l}x_ny_m+\beta_{k,l} & =z_{k,l}
    \end{cases}.\]

Using the first two equations of each system we obtain
\[a_k=\frac{x_k-x_{k-1}}{x_n-x_0},\;
    b_k=\frac{x_{k-1}x_n-x_{k}x_0}{x_n-x_0},\;
    c_l=\frac{y_l-y_{l-1}}{y_m-y_0} \text{ and }
    d_l=\frac{y_{l-1}y_m-y_{l}y_0}{y_m-y_0}.\]

From the remaining equations we get
\[\begin{cases}
        e_{k,l}x_n+f_{k,l}y_m+g_{k,l}z_{n,m}+\alpha_{k,l}x_ny_m+\beta_{k,l} & =z_{k,l}     \\
        e_{k,l}x_0+f_{k,l}y_m+g_{k,l}z_{0,m}+\alpha_{k,l}x_0y_m+\beta_{k,l} & =z_{k-1,l}   \\
        e_{k,l}x_n+f_{k,l}y_0+g_{k,l}z_{n,0}+\alpha_{k,l}x_ny_0+\beta_{k,l} & =z_{k,l-1}   \\
        e_{k,l}x_0+f_{k,l}y_0+g_{k,l}z_{0,0}+\alpha_{k,l}x_0y_0+\beta_{k,l} & =z_{k-1,l-1}
    \end{cases},\] or equivalently, with the notation
\[p_{k,l}=z_{k,l}-g_{k,l}z_{n,m},\;
    q_{k,l}=z_{k-1,l}-g_{k,l}z_{0,m},\;\]
\[r_{k,l}=z_{k,l-1}-g_{k,l}z_{n,0}\text{ and }
    t_{k,l}=z_{k-1,l-1}-g_{k,l}z_{0,0},
\] we get
\[\begin{cases}
        e_{k,l}x_n+f_{k,l}y_m+\alpha_{k,l}x_ny_m+\beta_{k,l} & =p_{k,l} \\
        e_{k,l}x_0+f_{k,l}y_m+\alpha_{k,l}x_0y_m+\beta_{k,l} & =q_{k,l} \\
        e_{k,l}x_n+f_{k,l}y_0+\alpha_{k,l}x_ny_0+\beta_{k,l} & =r_{k,l} \\
        e_{k,l}x_0+f_{k,l}y_0+\alpha_{k,l}x_0y_0+\beta_{k,l} & =t_{k,l}
    \end{cases}.\]

Hence,
\[\begin{cases}
        e_{k,l}+\alpha_{k,l}y_m & =\frac{p_{k,l}-q_{k,l}}{x_n-x_0} \\
        e_{k,l}+\alpha_{k,l}y_0 & =\frac{r_{k,l}-t_{k,l}}{x_n-x_0} \\
    \end{cases}\text{ and } \begin{cases}
        f_{k,l}+\alpha_{k,l}x_n & =\frac{p_{k,l}-r_{k,l}}{y_m-y_0} \\
        f_{k,l}+\alpha_{k,l}x_0 & =\frac{q_{k,l}-t_{k,l}}{y_m-y_0} \\
    \end{cases}.\]

Therefore, for \(g_{k,l}\) arbitrarily chosen in \((0,1)\), we have
\[\begin{aligned}
        \alpha_{k,l} & =\frac{p_{k,l}-q_{k,l}-r_{k,l}+t_{k,l}}{(x_n-x_0)(y_m-y_0)}
    \end{aligned},\]
\[\begin{aligned}
        e_{k,l} & =\frac{y_0(q_{k,l}-p_{k,l})-y_m(t_{k,l}-r_{k,l})}{(x_n-x_0)(y_m-y_0)}
    \end{aligned},\]
\[\begin{aligned}
        f_{k,l} & =\frac{x_0(r_{k,l}-p_{k,l})-x_n(t_{k,l}-q_{k,l})}{(x_n-x_0)(y_m-y_0)}
    \end{aligned}\]
and
\[\begin{aligned}
        \beta_{k,l} & =\frac{y_0(x_0p_{k,l}-x_nq_{k,l})-y_m(x_0r_{k,l}-x_nt_{k,l})}{(x_n-x_0)(y_m-y_0)}
    \end{aligned}.\]

Let us consider
\[\theta_1 = \left\{\begin{aligned}
    & \qquad \qquad\qquad1,                                                                       &  & e_{1,1}=\dots = e_{n,m}=\alpha_{1,1}=\dots=\alpha_{n,m}=0; \\
    & \frac{\displaystyle 1-\max_{k\in\{1,\dots,n\}} a_k}{\displaystyle 2\max_{\ovset{k\in\{1,\dots,n\}}{l\in\{1,\dots,m\}}}(|e_{k,l}|+\delta|\alpha_{k,l}|)}, &  & \text{otherwise},
 \end{aligned}\right.\]
 \[\theta_2 = \left\{\begin{aligned}
    & \qquad \qquad\qquad1,                                                                       &  & f_{1,1}=\dots = f_{n,m}=\alpha_{1,1}=\dots=\alpha_{n,m}=0; \\
    & \frac{\displaystyle 1-\max_{l\in\{1,\dots,m\}} c_l}
    {\displaystyle 2\max_{\ovset{k\in\{1,\dots,n\}}{l\in\{1,\dots,m\}}}(|f_{k,l}|+\delta|\alpha_{k,l}|)}, &  & \text{otherwise},
 \end{aligned}\right.\]
and
\[\theta =
    \min\left\{\theta_1,\theta_2
    \right\},\]
where \[\delta:=\max\{|a|,|b|,|c|,|d|\}.\]

The function \(\rho:\mathbb{R}^3\times\mathbb{R}^3\to[0,\infty)\), given by
\[\rho((x,y,z),(x',y',z'))=|x-x'|+|y-y'|+\theta|z-z'|,\] for all \((x,y,z),(x',y',z')\in\mathbb{R}^3 \), is a metric.
\\\\

\textbf{\(F_{k,l}\) is a contraction in respect of \(\rho\)}
\\

We have
\begin{align*}
     & \rho(F_{k,l}(x,y,z),F_{k,l}(x',y',z'))                                                            \\
     & \quad=a_k|x-x'|+c_l|y-y'|+\theta|e_{k,l}(x-x')+f_{k,l}(y-y')+g_{k,l}(z-z')+\alpha_{k,l}(xy-x'y')|
    \\&\quad\leq (a_k+\theta|e_{k,l}|)|x-x'|+(c_l+\theta|f_{k,l}|)|y-y'| +\theta g_{k,l}|z-z'|+\theta|\alpha_{k,l}||x(y-y')+y'(x-x')|
    \\&\quad\leq [a_k+\theta(|e_{k,l}|+\delta|\alpha_{k,l}|)]|x-x'|+[c_l+\theta(|f_{k,l}| +\delta|\alpha_{k,l}|)]|y-y'|+\theta g_{k,l}|z-z'|
    \\&\quad\leq C_{k,l} \rho((x,y,z),(x',y',z')),
\end{align*}
for all \((x,y,z),(x',y',z')\in I \times J \times \mathbb{R}\), \(k\in\{1,\dots,n\}\) and \(l\in\{1,\dots,m\}\), where
\[C_{k,l}=\max\left\{a_k+\theta(|e_{k,l}|+\delta|\alpha_{k,l}|),\;c_l+\theta(|f_{k,l}|+\delta|\alpha_{k,l}|),\; g_{k,l} \right\}<1.\]

Therefore, \(F_{k,l}\) is a contraction in respect of the \(\rho\) metric.\\

\textbf{Finding an IFS whose attractor is a surface which interpolates \(\mathcal{D}\)}
\\

According to Proposition 2.2 from \cite{dallas} there exists a continuous function \(f:I\times J\to \mathbb{R}\) having the following two properties:
\begin{enumerate}
    \item \[f(x_k,y_l)=z_{k,l},\]
          for all \(k\in\{0,1,\dots,n\}\) and \(l\in\{0,1,\dots,m\}\), i.e. \(f\) interpolates \(\mathcal{D}\).
    \item \[G_f=\mathcal{A}_\mathscr{S},\]
          where \(\mathscr{S}\) is the IFS \(((I\times J\times\mathbb{R},\rho),(F_{k,l})_{k\in\{1,\dots,n\},l\in\{1,\dots,m\}})\).
\end{enumerate}

In view of Subsection 2.2 from \cite{dallas}, the collinearity condition \eqref{collin} is not restrictive.
\\

\textbf{Deriving a formula for the fixed point of \(F_{k,l}\)}
\\

For \(k\{\in{1,\dots,n}\} \) and \(l\in\{1,\dots,m\}\), let \(\gamma_{k,l}=(x_{k,l},y_{k,l},z_{k,l})\) be the fixed point of \(F_{k,l}\).

Then
\[(a_kx_{k,l}+b_k,c_ly_{k,l}+d_l,e_{k,l}x_{k,l}+f_{k,l}y_{k,l}+g_{k,l}z_{k,l}+\alpha_{k,l}x_{k,l}y_{k,l}+\beta_{k,l})=(x_{k,l},y_{k,l},z_{k,l}),\]
hence
\[\begin{cases}
        a_kx_{k,l}+b_k                                                          & =x_{k,l}  \\
        c_ly_{k,l}+d_l                                                          & =y_{k,l}  \\
        e_{k,l}x_{k,l}+f_{k,l}y_{k,l}+g_{k,l}z_{k,l}+\alpha_{k,l}x_{k,l}y_{k,l}+\beta_{k,l} & = z_{k,l}
    \end{cases}.\]

From the first two equation we deduce that \[x_{k,l} = \frac{b_k}{1-a_k} \text{ and } y_{k,l} = \frac{d_l}{1-c_l}.\]

Substituting in the third equation we obtain
\[e_{k,l}\frac{b_k}{1-a_k}+f_{k,l}\frac{d_l}{1-c_l}+g_{k,l}z_{k,l}+\alpha_{k,l}\frac{b_kd_l}{(1-a_k)(1-c_l)}+\beta_{k,l}= z_{k,l},\]
therefore
\[z_{k,l} =\frac{1}{1-g_{k,l}}\left[e_{k,l}\frac{b_k}{1-a_k}+f_{k,l}\frac{d_l}{1-c_l}+\alpha_{k,l}\frac{b_kd_l}{(1-a_k)(1-c_l)}+\beta_{k,l}\right].\]

We conclude that \[\gamma_{k,l} = \left(\frac{b_k}{1-a_k},\frac{d_l}{1-c_l},\frac{1}{1-g_{k,l}}\left[e_{k,l}\frac{b_k}{1-a_k}+f_{k,l}\frac{d_l}{1-c_l}+\alpha_{k,l}\frac{b_kd_l}{(1-a_k)(1-c_l)}+\beta_{k,l}\right]\right).\]
\\

\textbf{Describing balls in respect of \(\rho\) }
\\

The closed ball with in respect of \(\rho\), having radius \(r>0\) and center \((x_0,y_0,z_0)\in\mathbb{R}^3\), is the set
\[\begin{aligned}\{(x,y,z)\in\mathbb{R}^3|\rho((x,y,z),(x_0,y_0,z_0))\leq r\}=\{(x,y,z)\in\mathbb{R}^3||x-x_0|+|y-y_0|+\theta|z-z_0|\leq r\}, \\
    \end{aligned}\]
denoted by \(O[(x_0,y_0,z_0),r]\). 

 This set is an octahedron having the following vertices 
\(V_1\left(x_0+r,y_0,z_0\right)\),
\(V_2\left(x_0,y_0+r,z_0\right)\),
\(V_3\left(x_0,y_0,z_0+\frac{r}{\theta}\right)\),
\(V_4\left(x_0-r,y_0,z_0\right)\),
\(V_5\left(x_0,y_0-r,z_0\right)\) and
\(V_6\left(x_0,y_0,z_0-\frac{r}{\theta}\right)\).
\\

\textbf{Obtaining a cover for \(G_f\)}
\\

Summarizing, we have
{\allowdisplaybreaks
\begin{align*}
     & \bullet \delta:=\max\{|a|,|b|,|c|,|d|\}.
     \\&\bullet\theta =
        \min\left\{\theta_1,\theta_2
        \right\}, \text{ where }
    \\& \theta_1 = \left\{\begin{aligned}
        & \qquad \qquad\qquad1,                                                                       &  & e_{1,1}=\dots = e_{n,m}=\alpha_{1,1}=\dots=\alpha_{n,m}=0; \\
        & \frac{\displaystyle 1-\max_{k\in\{1,\dots,n\}} a_k}{\displaystyle 2\max_{\ovset{k\in\{1,\dots,n\}}{l\in\{1,\dots,m\}}}(|e_{k,l}|+\delta|\alpha_{k,l}|)}, &  & \text{otherwise},
     \end{aligned}\right.
     \\& \theta_2 = \left\{\begin{aligned}
        & \qquad \qquad\qquad1,                                                                       &  & f_{1,1}=\dots = f_{n,m}=\alpha_{1,1}=\dots=\alpha_{n,m}=0; \\
        & \frac{\displaystyle 1-\max_{l\in\{1,\dots,m\}} c_l}
        {\displaystyle 2\max_{\ovset{k\in\{1,\dots,n\}}{l\in\{1,\dots,m\}}}(|f_{k,l}|+\delta|\alpha_{k,l}|)}, &  & \text{otherwise},
     \end{aligned}\right.
    \\&\bullet M=\max_{\ovset{i,k\in\{1,\dots,n\}}{j,l\in\{1,\dots,m\}}}\rho(\gamma_{i,j},\gamma_{k,l})
    \intertext{ and }
     & \bullet\gamma_{k,l} = \left(\frac{b_k}{1-a_k},\frac{d_l}{1-c_l},\frac{1}{1-g_{k,l}}\left[e_{k,l}\frac{b_k}{1-a_k}+f_{k,l}\frac{d_l}{1-c_l}+\alpha_{k,l}\frac{b_kd_l}{(1-a_k)(1-c_l)}+\beta_{k,l}\right]\right)
    \\&
    \\&\bullet C_{k,l}=\max\left\{a_k+\theta(|e_{k,l}|+\delta|\alpha_{k,l}|),\;c_l+\theta(|f_{k,l}|+\delta|\alpha_{k,l}|), g_{k,l} \right\}
\end{align*} for all \(k\in\{1,\dots ,n\} \text{ and }  l\in\{1,\dots ,m\}\).}

Let us choose \(k',k''\in\{1,\dots,n\}\) and \(l',l''\in\{1,\dots,m\}\) such that 
\[C_{k',l'}=\max_{\ovset{k\in\{1,\dots,n\}}{l\in\{1,\dots,m\}}}C_{k,l}\qquad\text{and}\qquad C_{k'',l''}=\max_{\ovset{k\in\{1,\dots,n\}\setminus\{k'\}}{l\in\{1,\dots,m\}\setminus\{l'\}}}C_{k,l}.\]

Taking into account the previous results, we get our main result:
\begin{theorem}\label{incl} In the previous framework, we have
    \[G_f\subseteq \left(\bigcup_{\ovset{k\in\{1,\dots,n\}\setminus\{k'\}}{l\in\{1,\dots,m\}\setminus\{l'\}}}O\left[\gamma_{k,l},MC_{k,l}\frac{1+C_{{k',l'}}}{1-C_{k',l'}C_{k'',l''}}\right]\right)\bigcup O\left[\gamma_{k',l'},MC_{k',l'}\frac{1+C_{{k'',l''}}}{1-C_{k',l'}C_{k'',l''}}\right].\]
\end{theorem}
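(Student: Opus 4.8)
The plan is to deduce Theorem \ref{incl} as a direct instance of Theorem 3.1, by checking that the fractal interpolation setup of Section 4 fits the abstract framework of Section 3 verbatim, with the index set reinterpreted as a double index. First I would observe that the IFS $\mathscr{S}=((I\times J\times\mathbb{R},\rho),(F_{k,l})_{k\in\{1,\dots,n\},l\in\{1,\dots,m\}})$ is an iterated function system in the sense of Definition \ref{fractaldef}: the space $(I\times J\times\mathbb{R},\rho)$ is complete (it is $\mathbb{R}^3$ with a norm equivalent to the Euclidean one), and each $F_{k,l}$ is a Banach contraction with respect to $\rho$ — this is precisely the computation carried out in the paragraph "$F_{k,l}$ is a contraction in respect of $\rho$", which yields $\rho(F_{k,l}(u),F_{k,l}(v))\leq C_{k,l}\,\rho(u,v)$ with $C_{k,l}<1$. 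Hence the constant $C_{k,l}$ plays the role of $c_i$ from Section 3.

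Next I would match the remaining data. The number of maps is $nm\geq 2$ (we may assume $n,m\geq 1$ and $nm\geq 2$; if $n=m=1$ the statement is vacuous or trivial). The fixed point $\gamma_{k,l}$ of $F_{k,l}$ is the one derived explicitly in the paragraph "Deriving a formula for the fixed point of $F_{k,l}$", so it plays the role of $\gamma_i$. The quantity $M=\max_{i,k,j,l}\rho(\gamma_{i,j},\gamma_{k,l})$ is exactly the diameter-type constant $M=\max_{i,j}d(\gamma_i,\gamma_j)$ of Section 3. The pair $(k',l')$ with $C_{k',l'}=\max_{k,l}C_{k,l}$ is the index $i'$, and $(k'',l'')$ with $C_{k'',l''}=\max_{(k,l)\neq(k',l')}C_{k,l}$ — here using the product index set and noting that the maximum over $\{1,\dots,n\}\setminus\{k'\}\times\{1,\dots,m\}\setminus\{l'\}$ agrees with the maximum over all $(k,l)\neq(k',l')$ as required — is the index $i''$. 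Finally, $G_f=\mathcal{A}_{\mathscr{S}}$ by property (2) of the cited Proposition 2.2 from \cite{dallas}, so $\att$ of Section 3 becomes $G_f$.

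With these identifications in place, Theorem 3.1 applied to $\mathscr{S}$ gives
\[
\mathcal{A}_{\mathscr{S}}\subseteq\left(\bigcup_{(k,l)\neq(k',l')}O\!\left[\gamma_{k,l},M C_{k,l}\frac{1+C_{k',l'}}{1-C_{k',l'}C_{k'',l''}}\right]\right)\bigcup O\!\left[\gamma_{k',l'},M C_{k',l'}\frac{1+C_{k'',l''}}{1-C_{k',l'}C_{k'',l''}}\right],
\]
and substituting $\mathcal{A}_{\mathscr{S}}=G_f$ and rewriting the union over the product index set yields exactly the claimed inclusion; recall that a closed ball with respect to $\rho$ is the octahedron $O[\cdot,\cdot]$ as described in the paragraph "Describing balls in respect of $\rho$", which is why the balls $B[\cdot,\cdot]$ of Theorem 3.1 become octahedrons here.

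The only genuine point requiring care — the "main obstacle", though it is minor — is the bookkeeping of the double index: one must confirm that $\{1,\dots,n\}\times\{1,\dots,m\}$ with at least two elements legitimately instantiates the single index set $\{1,\dots,N\}$ with $N\geq 2$, that the choice of $(k'',l'')$ via the product-set maximum coincides with the "second largest $C$" in the sense of Section 3 (it does, since $\max_{(k,l)\neq(k',l')}C_{k,l}\geq\max_{k\neq k',\,l\neq l'}C_{k,l}$ and, by choosing the representative appropriately, equality can be arranged — or one simply notes that Theorem 3.1's proof only uses $c_{i''}\geq c_i$ for all $i\neq i'$, which holds here), and that none of the denominators vanish, i.e. $C_{k',l'}C_{k'',l''}<1$, which is immediate from $C_{k,l}<1$ for all $k,l$. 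Everything else is a transcription of Section 3's conclusion into the notation of Section 4.
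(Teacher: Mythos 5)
Your overall strategy is exactly the paper's: Section~4 is set up so that Theorem~\ref{incl} is obtained by applying Theorem~3.1 to the IFS $\mathscr{S}=((I\times J\times\mathbb{R},\rho),(F_{k,l})_{k,l})$, using the contraction estimate $\rho(F_{k,l}(u),F_{k,l}(v))\le C_{k,l}\rho(u,v)$, the explicit fixed points $\gamma_{k,l}$, the identity $G_f=\mathcal{A}_{\mathscr{S}}$, and the fact that closed $\rho$-balls are octahedrons; the paper itself says no more than ``Taking into account the previous results, we get our main result.'' So the approach matches.

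However, your handling of the one point you rightly flag as delicate --- the identification of $(k'',l'')$ with the abstract $i''$ --- is not correct as written. The two claims you offer as justification are both false in general. First, it is not true that $\max_{(k,l)\neq(k',l')}C_{k,l}$ equals $\max_{k\neq k',\,l\neq l'}C_{k,l}$ ``by choosing the representative appropriately'': the second-largest constant can occur at a pair of the form $(k',l)$ with $l\neq l'$ (or $(k,l')$ with $k\neq k'$), which lies outside the restricted index set $\bigl(\{1,\dots,n\}\setminus\{k'\}\bigr)\times\bigl(\{1,\dots,m\}\setminus\{l'\}\bigr)$, and then the inequality is strict and no choice of maximizer repairs it. Second, for the same reason the restricted maximum $C_{k'',l''}$ need not satisfy $C_{k'',l''}\geq C_{k,l}$ for all $(k,l)\neq(k',l')$, so you cannot fall back on ``Theorem 3.1's proof only uses $c_{i''}\geq c_i$ for $i\neq i'$.'' The same issue affects the indexing of the union in the displayed inclusion: if it really excluded every pair with $k=k'$ or $l=l'$, the right-hand side would omit some fixed points $\gamma_{k',l}$, $\gamma_{k,l'}$ and the inclusion would not follow from (and could even contradict) Theorem~3.1.

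The correct resolution is notational, not mathematical: the stacked subscript $\ovset{k\in\{1,\dots,n\}\setminus\{k'\}}{l\in\{1,\dots,m\}\setminus\{l'\}}$ in the paper is to be read as the double index $(k,l)$ ranging over $\{1,\dots,n\}\times\{1,\dots,m\}\setminus\{(k',l')\}$, and likewise for the definition of $C_{k'',l''}$. This is confirmed by the appendix algorithm \texttt{radii}, which loops over all $[i,j]\neq ind_1$ and stores the second-largest constant as it scans. With that reading the instantiation of Theorem~3.1 is literal (single index set $\{1,\dots,nm\}$, $c_i\leftrightarrow C_{k,l}$, $i'\leftrightarrow(k',l')$, $i''\leftrightarrow(k'',l'')$), no discrepancy arises, and the rest of your argument --- completeness of $(\mathbb{R}^3,\rho)$, $C_{k,l}<1$, $\gamma_{k,l}$ as fixed points, $G_f=\mathcal{A}_{\mathscr{S}}$, balls as octahedrons --- goes through verbatim. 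You should replace the ``equality can be arranged'' / ``only uses $c_{i''}\geq c_i$'' paragraph with this observation.
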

The set \[\left(\bigcup_{\ovset{k\in\{1,\dots,n\}\setminus\{k'\}}{l\in\{1,\dots,m\}\setminus\{l'\}}}O\left[\gamma_{k,l},MC_{k,l}\frac{1+C_{{k',l'}}}{1-C_{k',l'}C_{k'',l''}}\right]\right)\bigcup O\left[\gamma_{k',l'},MC_{k',l'}\frac{1+C_{{k'',l''}}}{1-C_{k',l'}C_{k'',l''}}\right],\] which is a cover of \(\mathcal{A}_\mathscr{S}=G_f\) will be denoted by \(\mathcal{C_\mathscr{S}}\).

For \(p\in\mathbb{N}\), by \(\mathcal{C}_p\) we designate \(\mathcal{C}_{\mathscr{S}_p}\), which is also a cover of \(G_f\).

\begin{remark}
    In view of Remark 3.1, by increasing \(p\), the diameters of the octahedrons occurring in \(\mathcal{C}_p\) could be as small as we want.
\end{remark}

\section{Some graphical representations}

We apply our results to the data sets presented in the following examples:
\\

\textbf{Example 1}
\\

Let
\[x_0=0,\;x_1=100,\;x_2=200,\]
\[y_0=0,\;y_1=100,\;y_2=200,\]
with the corresponding \(z_{k,l}\) values 
  \begin{center}
        \begin{tabular}{ c|c|c|c }
            \backslashbox{\(l\)}{\(k\)}& 0 & 1& 2 \\ 
            \hline 0 & 0 & -10 & -20\\
            \hline 1& 10 & -30 & -10\\
            \hline 2 & 20 & 10 & 0
           \end{tabular}
    \end{center}
and \(g_{k,l}\) given by

\begin{center}
    \begin{tabular}{ c|c|c }
        \backslashbox{\(l\)}{\(k\)}& 1& 2 \\ 
        \hline 1 & 0.7 & 0.5 \\
        \hline 2& 0.6 & 0.6
    \end{tabular}.
\end{center}

The interpolation surface obtained has the following graphical representation:

\captionsetup[subfigure]{labelformat=empty}
\begin{figure}[H]
  \centering
  \begin{tabular}{c}
    \subfloat[\((O):\) Visualization of the interpolated surface]{\includegraphics[width = 0.5\textwidth]{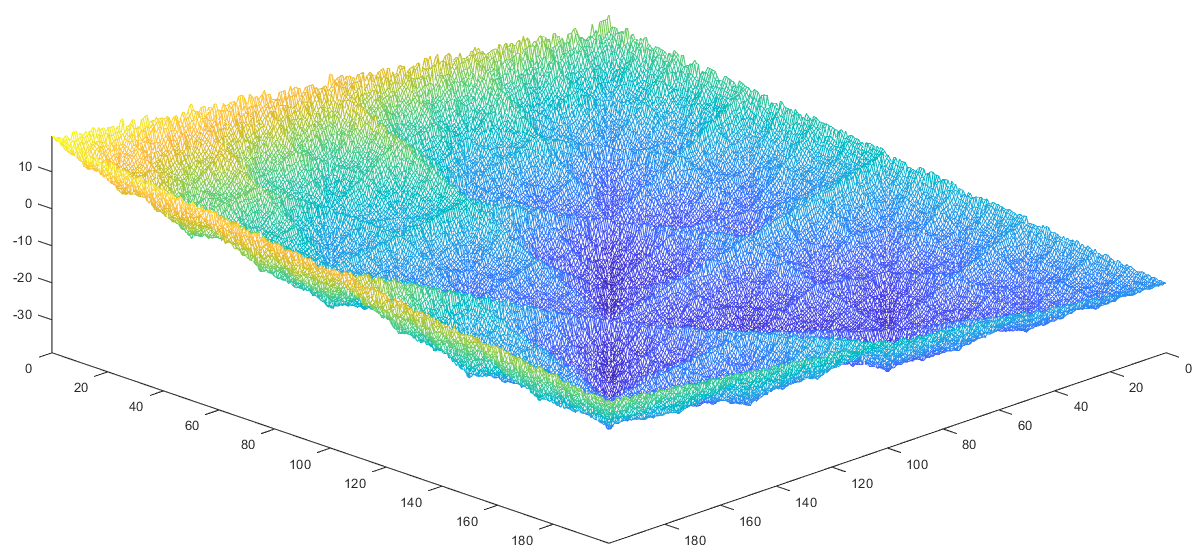}} 
  \end{tabular}
\end{figure}

The figures \((I),\) \((II),\) \((III),\) \((IV),\) \((V),\) \((VI)\) contain graphical representations of the covers \(\mathcal{C}_1\), \(\mathcal{C}_3\), \(\mathcal{C}_5\), \(\mathcal{C}_7\) and \(\mathcal{C}_9\).
\captionsetup[subfigure]{labelformat=empty}
\begin{figure}[H]
  \centering
  \begin{tabular}{cc}
    \subfloat[\((I):\) Visualization of \(\mathcal{C}_1\)]{\includegraphics[width = 0.5\textwidth]{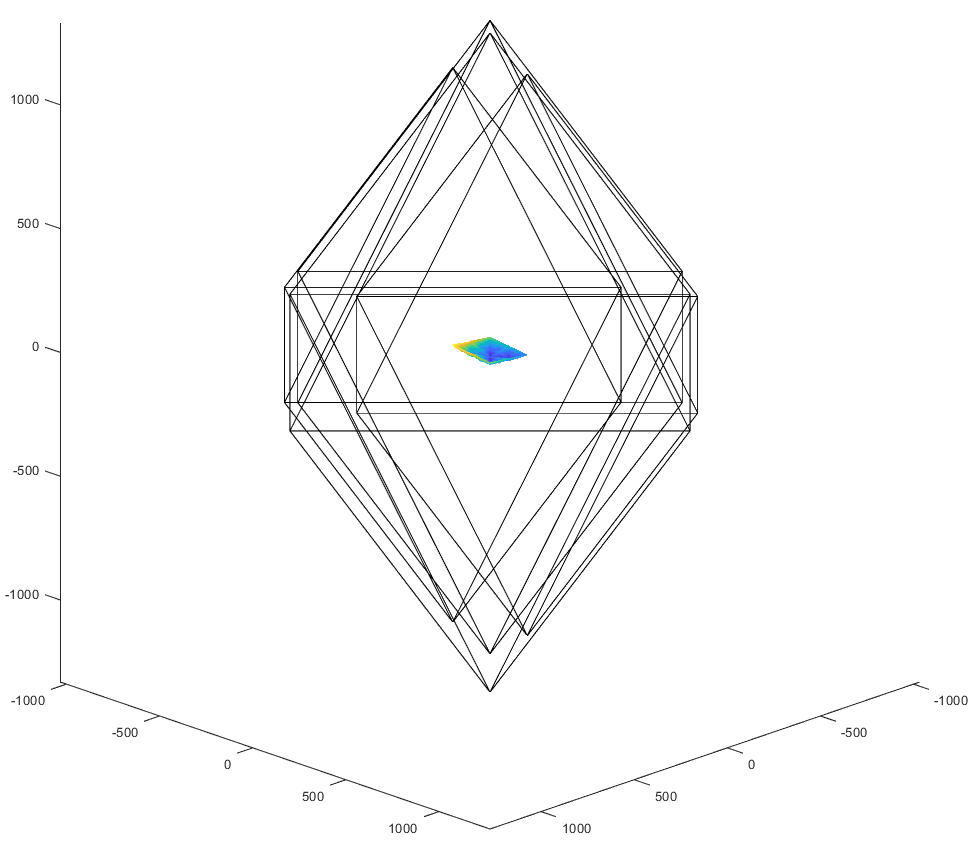}} &
    \subfloat[\((II):\) Visualization of \(\mathcal{C}_3\)]{\includegraphics[width = 0.5\textwidth]{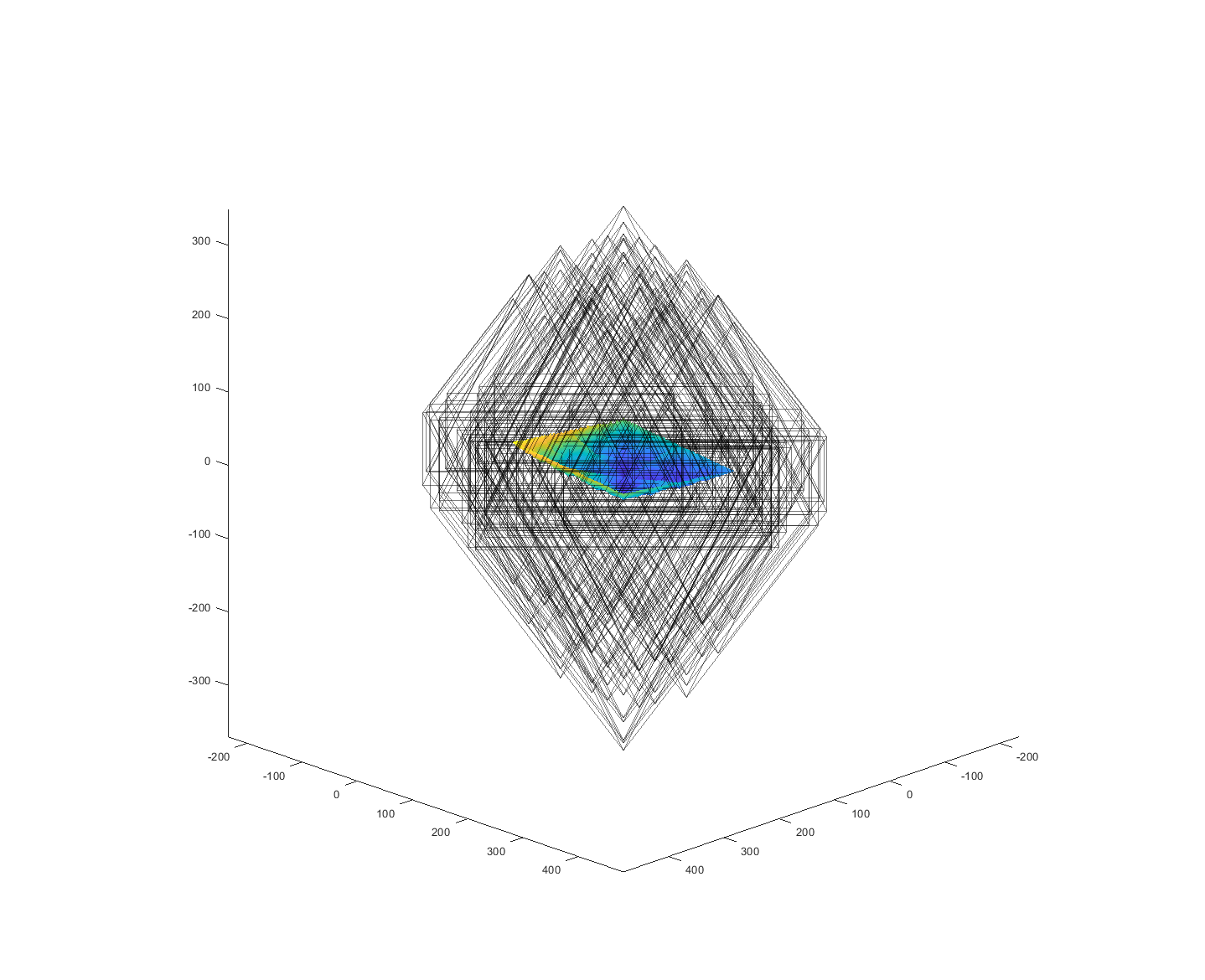}}
  \end{tabular}
\end{figure}

\begin{figure}[H]
  \centering
  \begin{tabular}{cc}
    \subfloat[\((III):\) Visualization of \(\mathcal{C}_5\)]{\includegraphics[width = 0.5\textwidth]{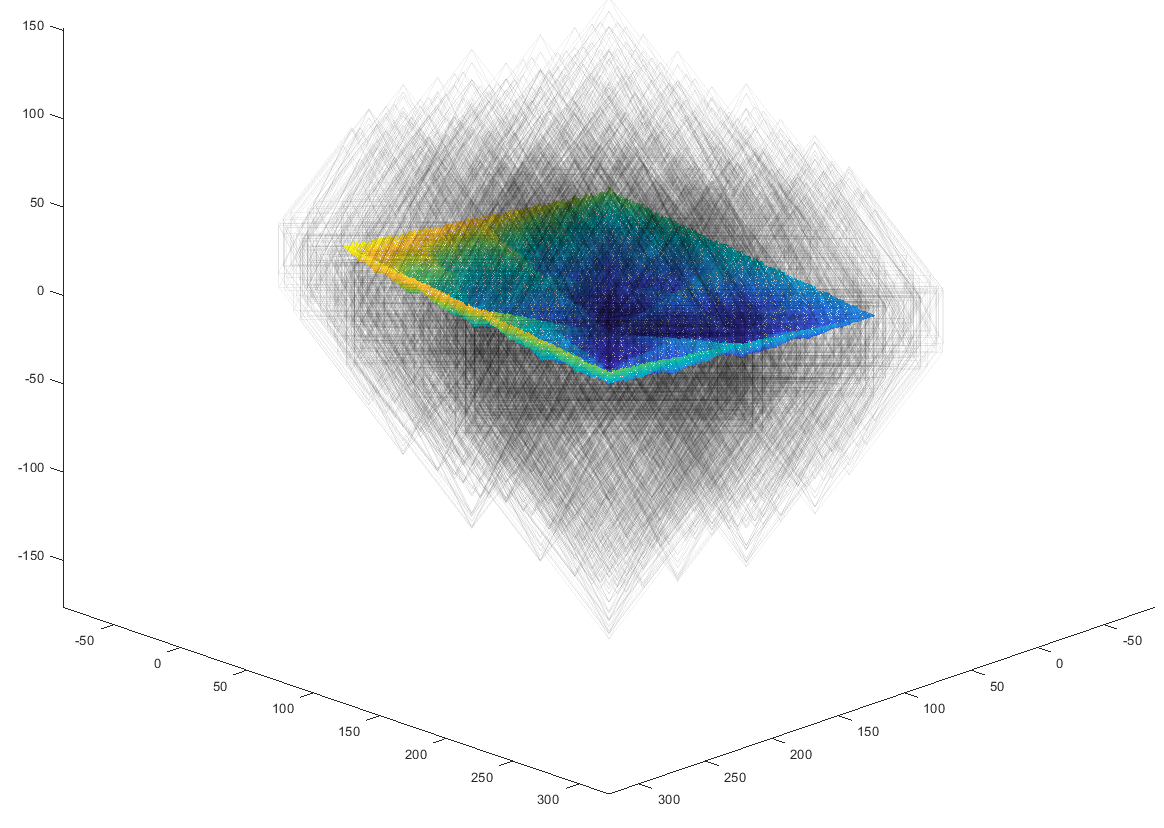}} &
    \subfloat[\((IV):\) Visualization of \(\mathcal{C}_7\)]{\includegraphics[width = 0.5\textwidth]{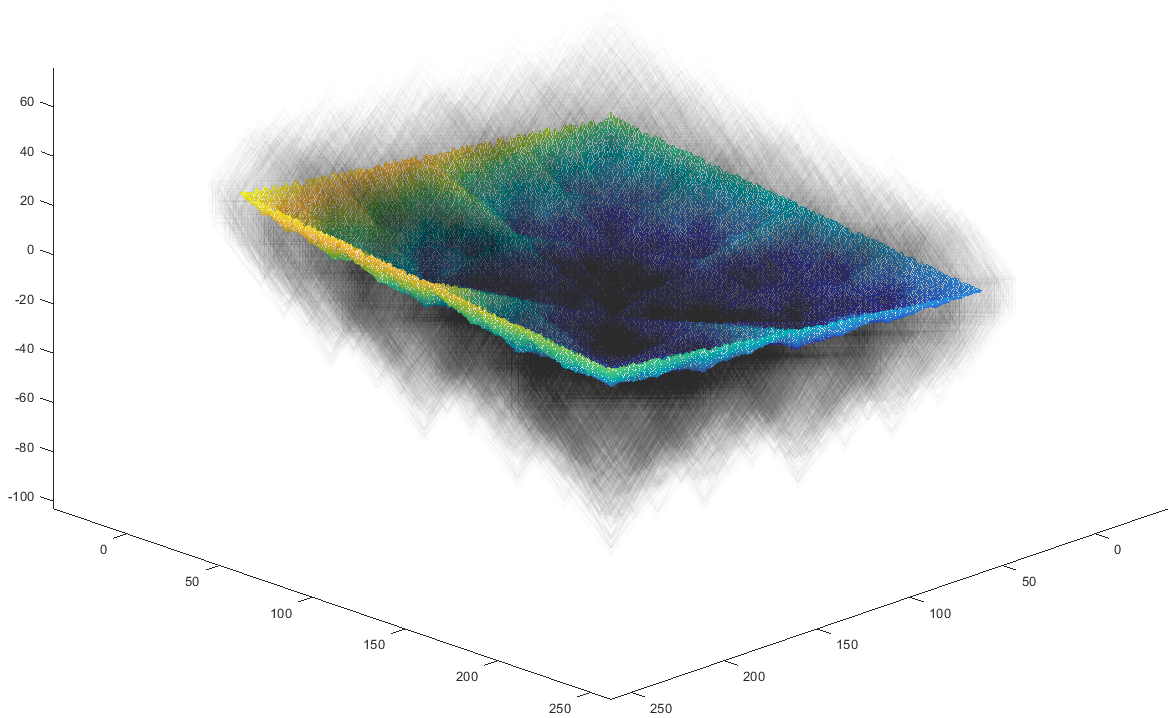}}
  \end{tabular}
\end{figure}

\begin{figure}[H]
  \centering
  \begin{tabular}{cc}
    \subfloat[\((V):\) Visualization of \(\mathcal{C}_9\)]{\includegraphics[width = 0.5\textwidth]{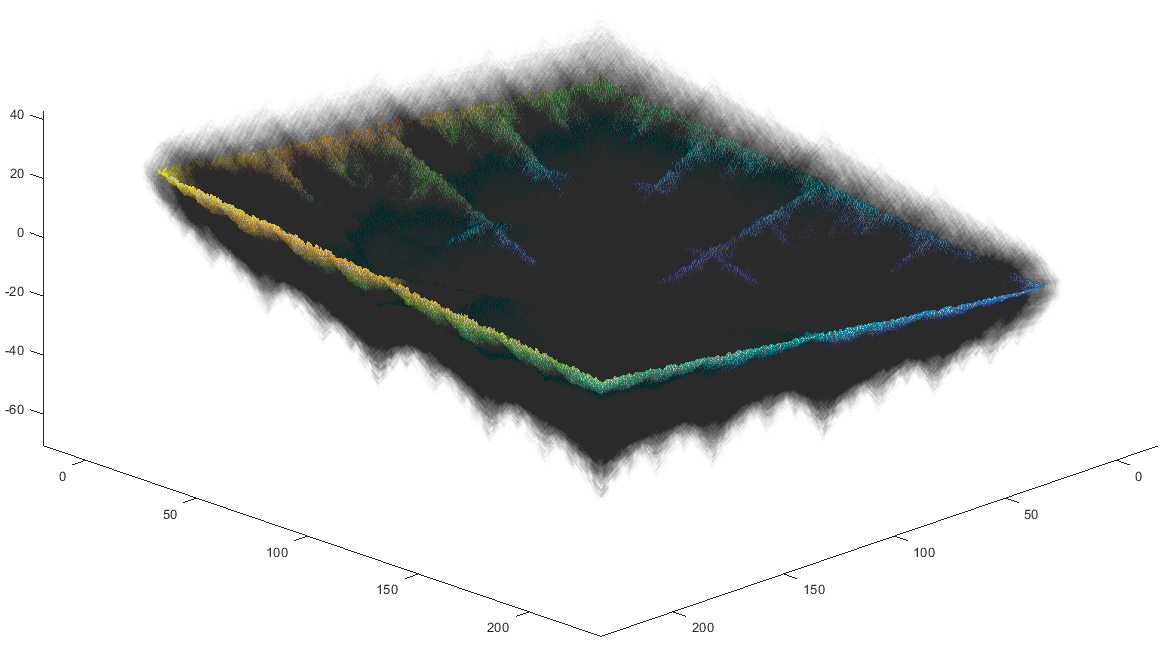}} &
  \end{tabular}
\end{figure}

\textbf{Example 2}
\\

Let
\[x_0=0,\;x_1=100,\;x_2=200, \;x_3=300\]
\[y_0=0,\;y_1=100,\;y_2=200,\; y_3=300\]
with the corresponding \(z_{k,l}\) values 
  \begin{center}
        \begin{tabular}{ c|c|c|c|c }
            \backslashbox{\(l\)}{\(k\)}& 0 & 1& 2& 3 \\ 
            \hline 0 & 0 & 15 & 30 & 45\\
            \hline 1& -10 & 20 & -30 & 35\\
            \hline 2 & -20 & 30 & 10 & 25\\
            \hline 3 & -30 & -15 & 0 & 15
           \end{tabular}
    \end{center}
and \(g_{k,l}\) given by

\begin{center}
    \begin{tabular}{ c|c|c|c }
        \backslashbox{\(l\)}{\(k\)}& 1& 2& 3 \\ 
        \hline 1 & 0.3 & 0.2& 0.5 \\
        \hline 2& 0.4 & 0.7 & 0.6 \\
        \hline 3& 0.3 & 0.6 & 0.4
     \end{tabular}.
\end{center}

The interpolation surface has the following graphical representation:

\captionsetup[subfigure]{labelformat=empty}
\begin{figure}[H]
  \centering
  \begin{tabular}{c}
    \subfloat[\((O):\) Visualization of the graph of the interpolated surface]{\includegraphics[width = 0.5\textwidth]{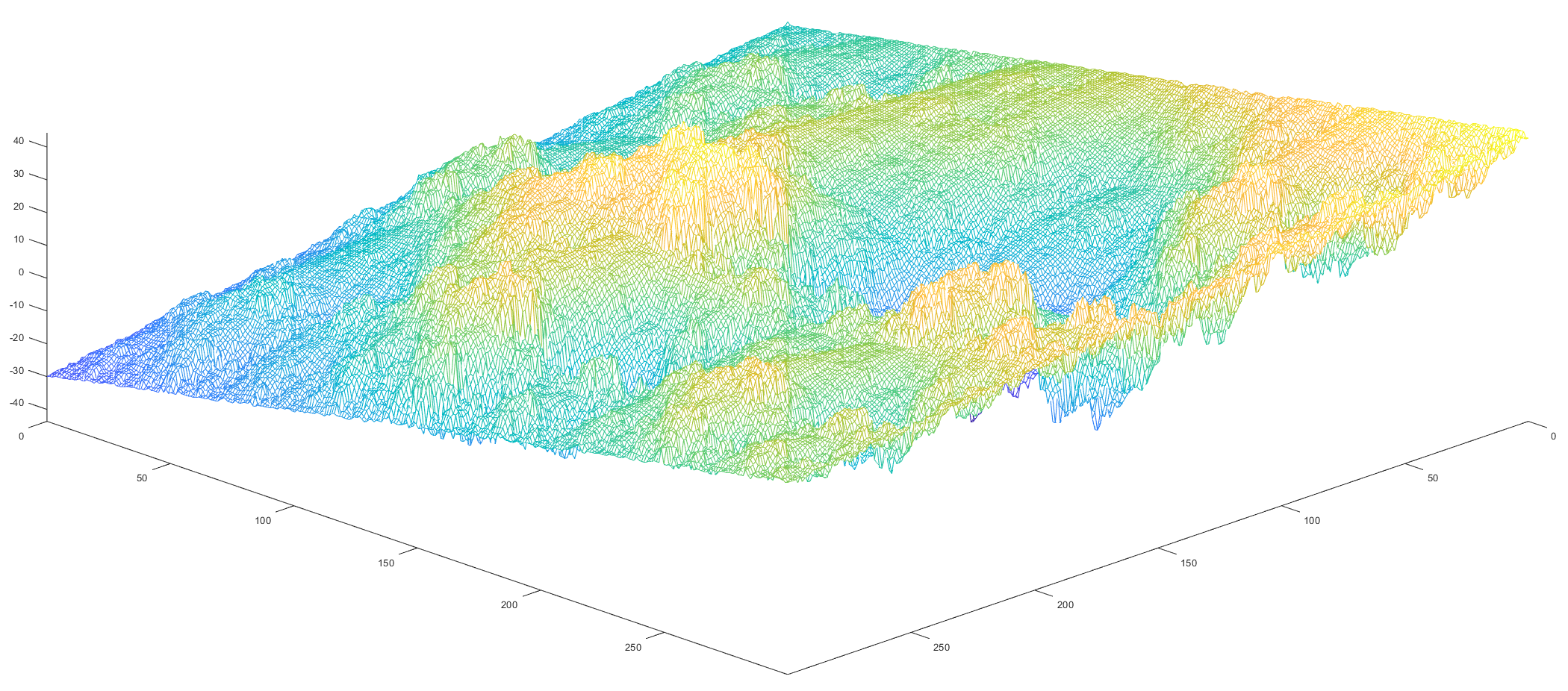}} 
  \end{tabular}
\end{figure}

The figures \((I),\) \((II),\) \((III),\) \((IV),\) \((V),\) \((VI)\) contain graphical representations of the covers \(\mathcal{C}_1\), \(\mathcal{C}_2\), \(\mathcal{C}_3\), \(\mathcal{C}_4\) and \(\mathcal{C}_5\).
\captionsetup[subfigure]{labelformat=empty}
\begin{figure}[H]
  \centering
  \begin{tabular}{cc}
    \subfloat[\((I):\) Visualization of \(\mathcal{C}_1\)]{\includegraphics[width = 0.5\textwidth]{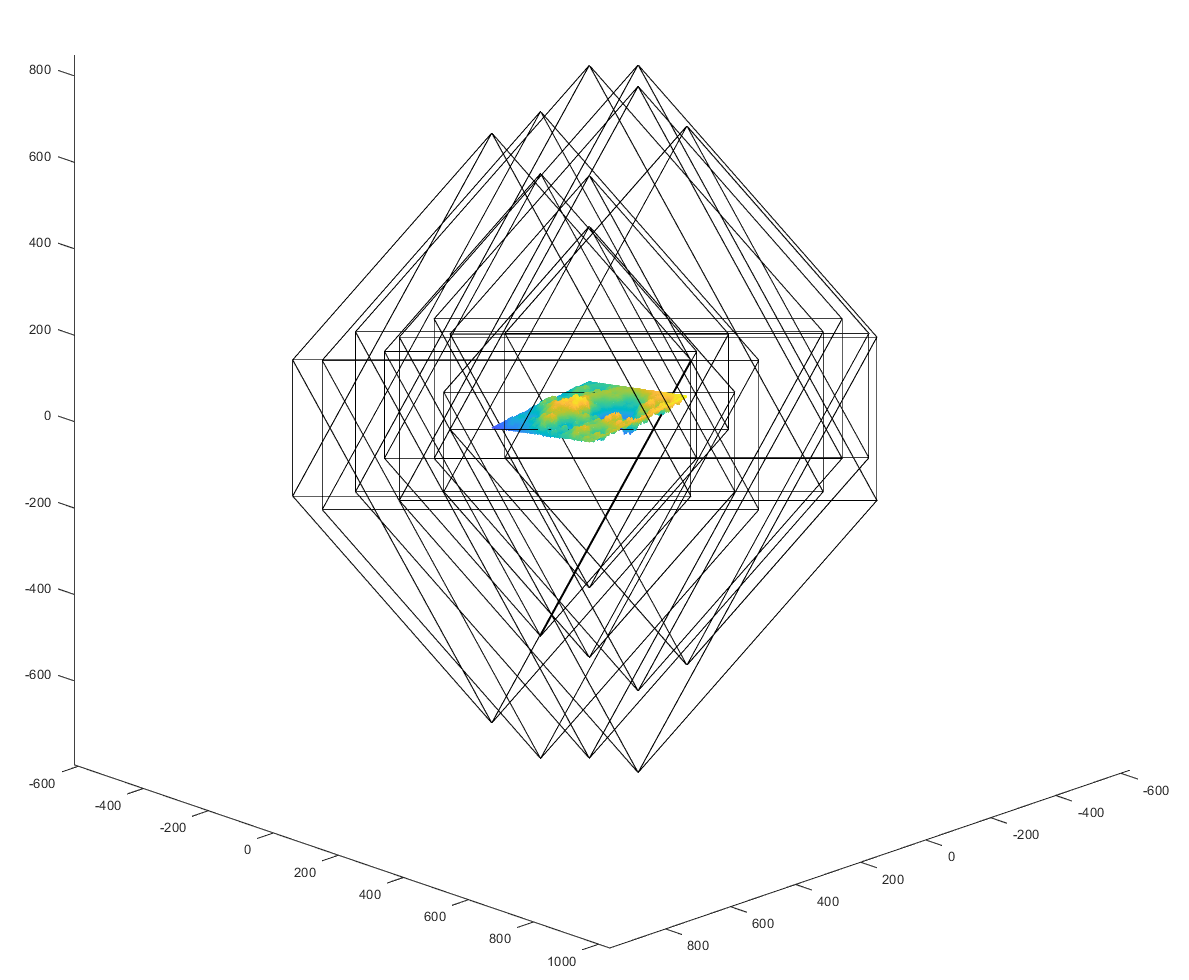}} &
    \subfloat[\((II):\) Visualization of \(\mathcal{C}_2\)]{\includegraphics[width = 0.5\textwidth]{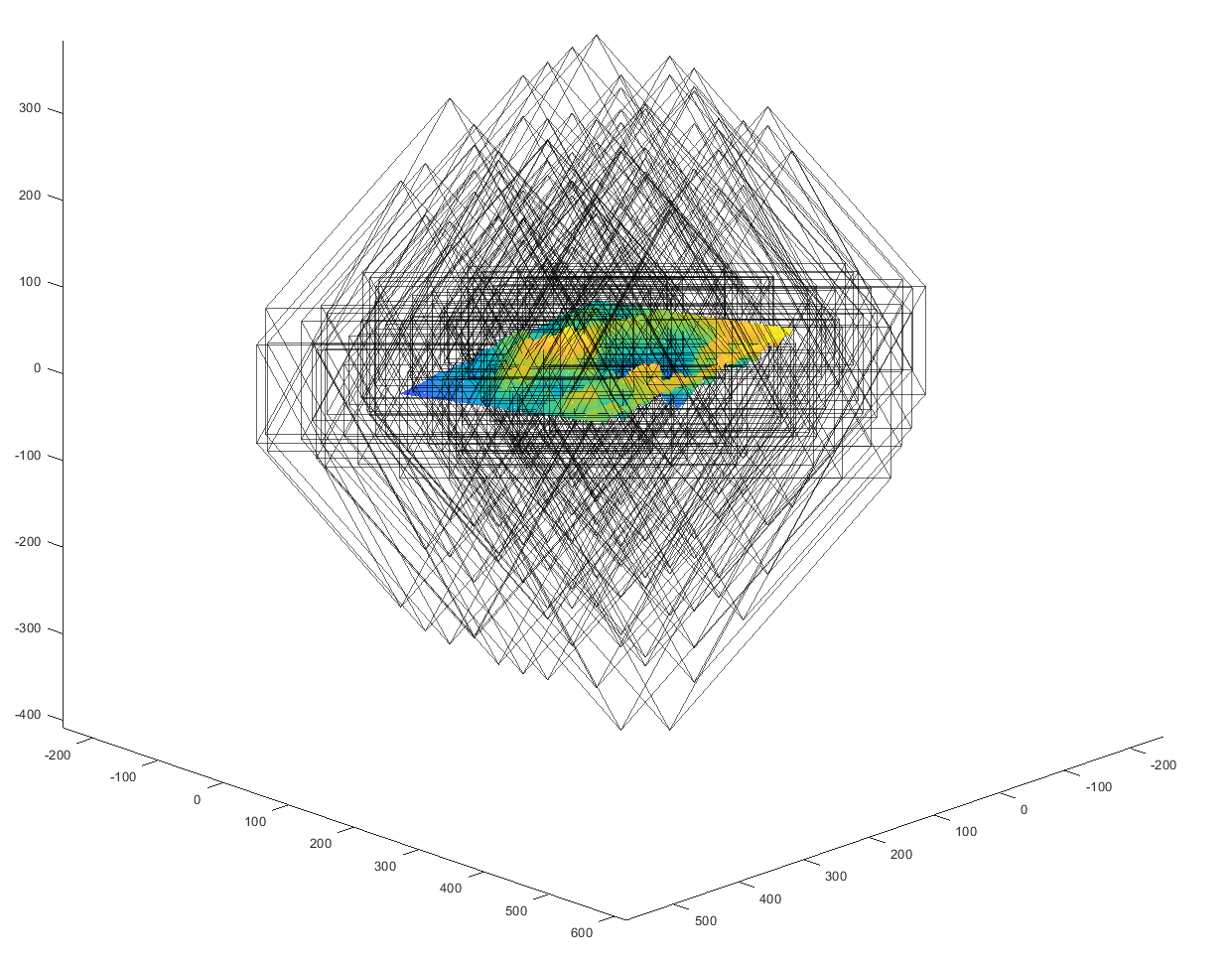}}
  \end{tabular}
\end{figure}

\begin{figure}[H]
  \centering
  \begin{tabular}{cc}
    \subfloat[\((III):\) Visualization of \(\mathcal{C}_3\)]{\includegraphics[width = 0.5\textwidth]{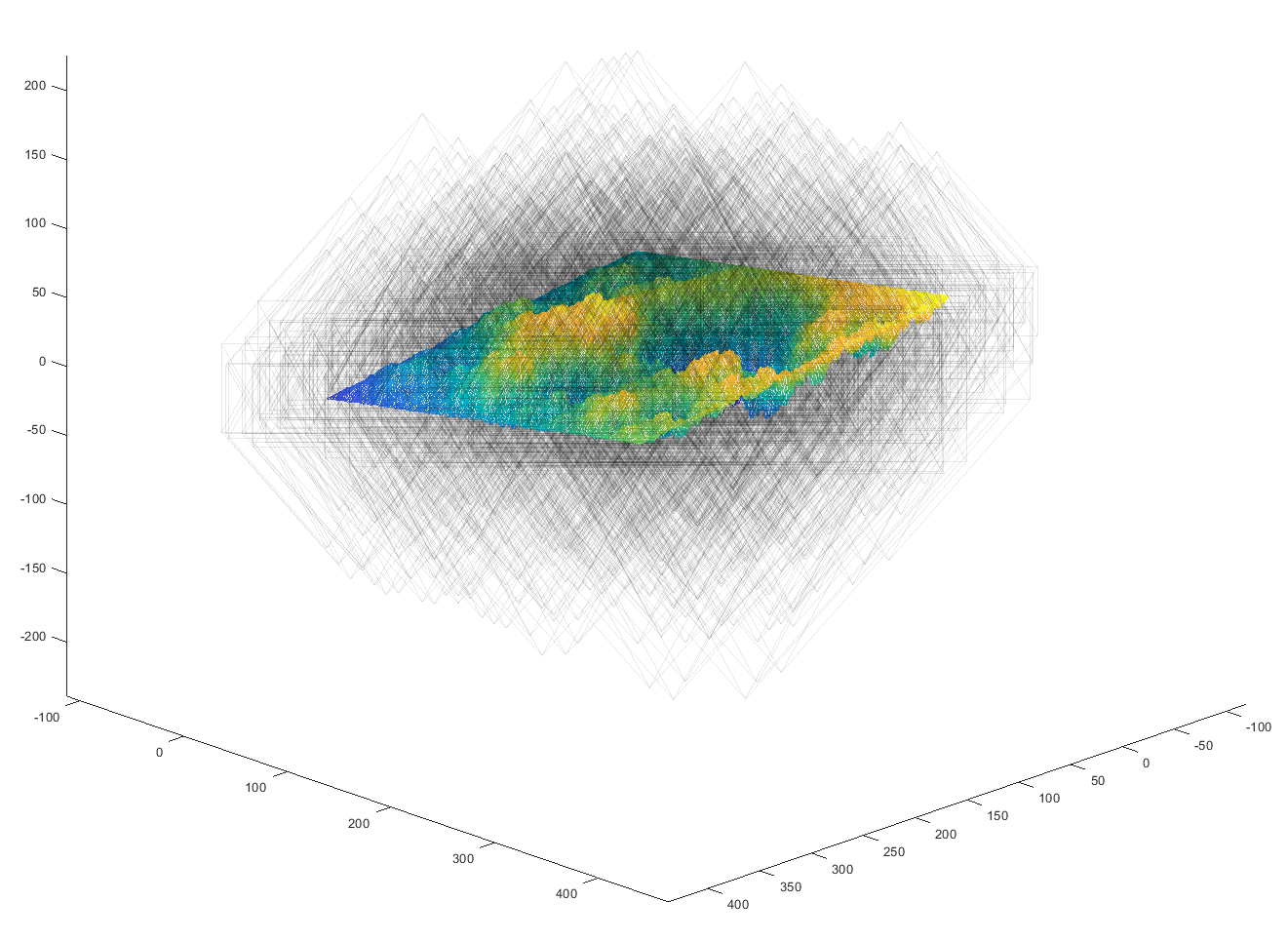}} &
    \subfloat[\((IV):\) Visualization of \(\mathcal{C}_4\)]{\includegraphics[width = 0.5\textwidth]{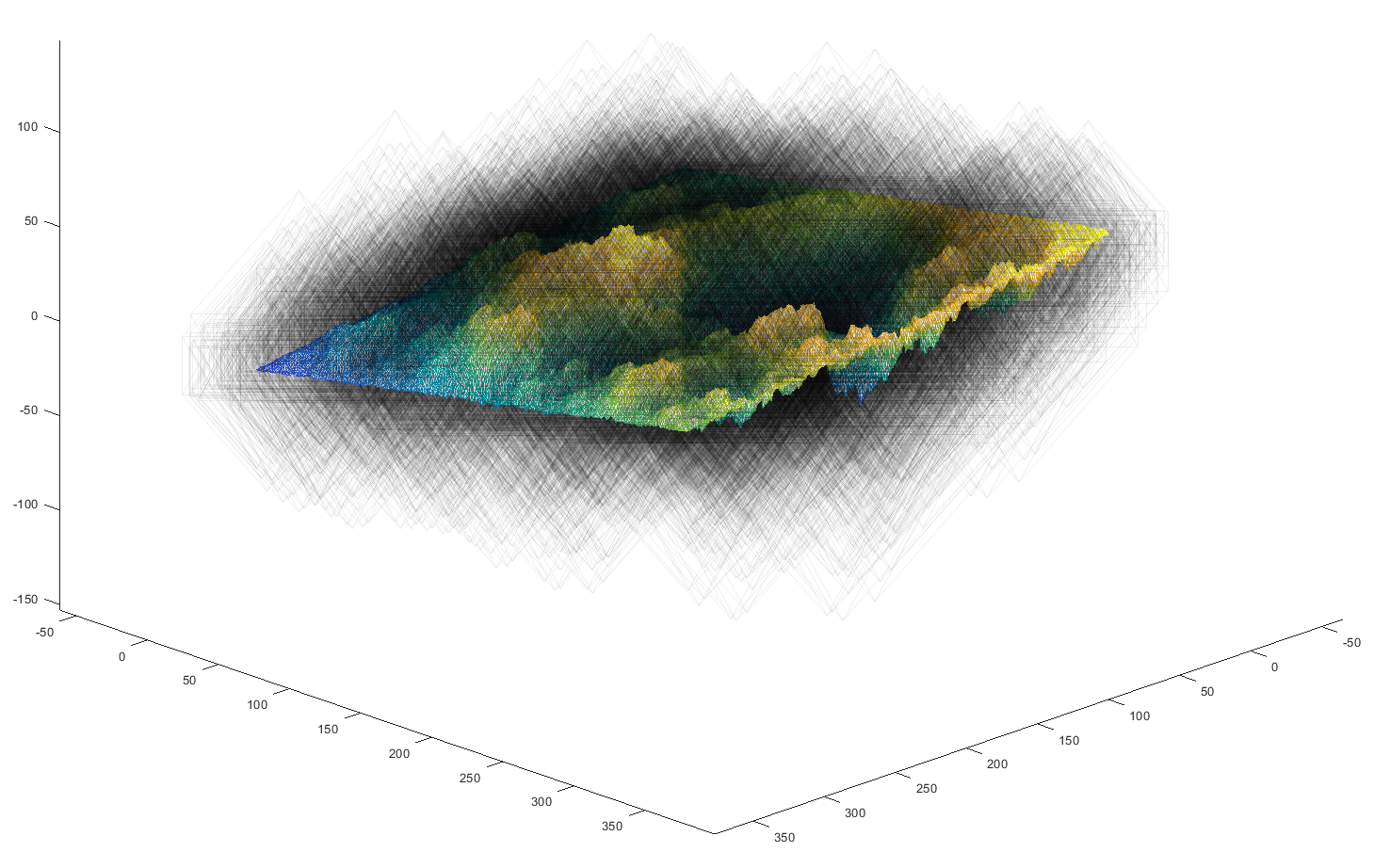}}
  \end{tabular}
\end{figure}

\begin{figure}[H]
  \centering
  \begin{tabular}{cc}
    \subfloat[\((V):\) Visualization of \(\mathcal{C}_5\)]{\includegraphics[width = 0.5\textwidth]{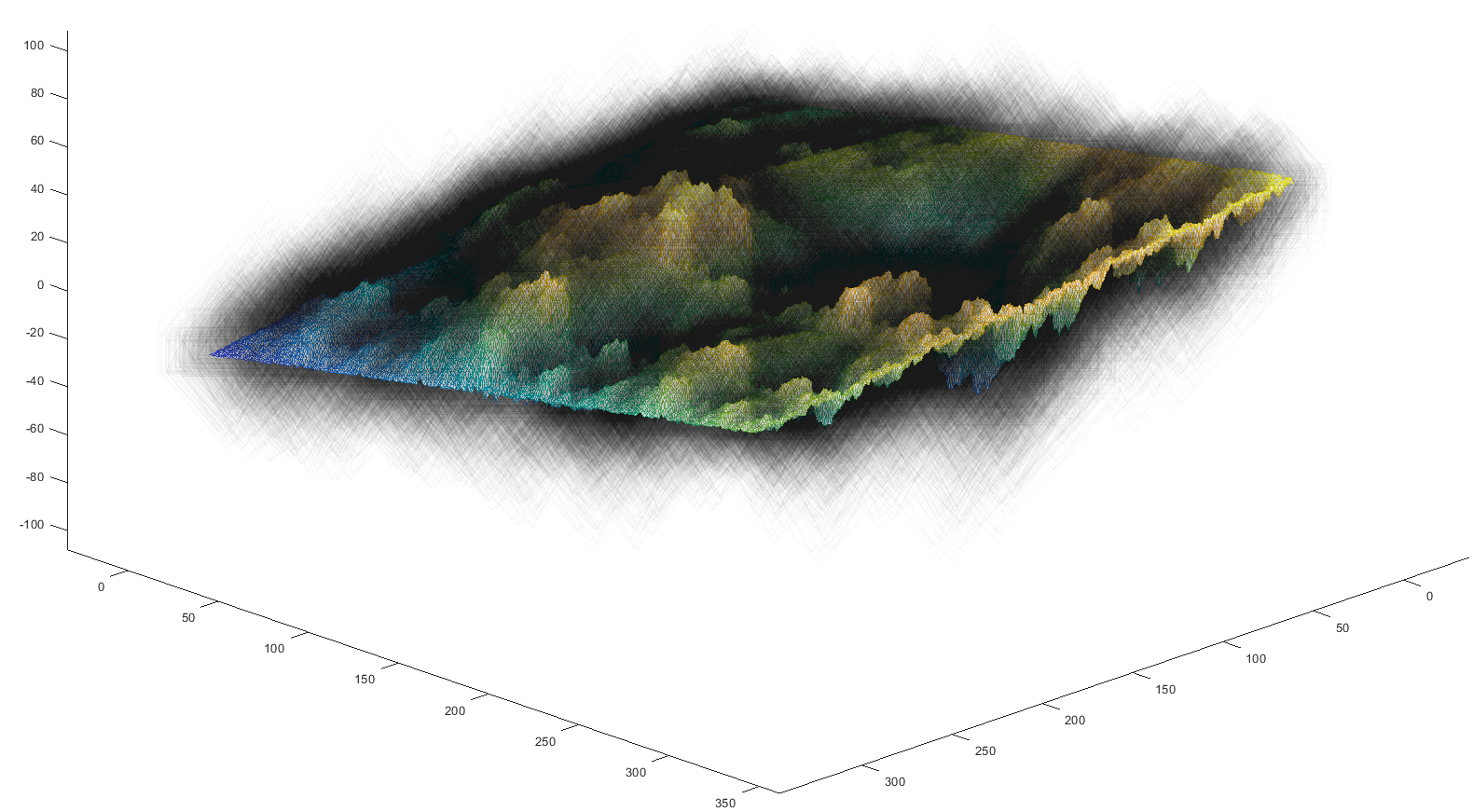}} &
  \end{tabular}
\end{figure}

\pagebreak
\section*{Appendix}
We provide the following algorithms, which were used to determine the covers presented in the previous examples.

\begin{algorithm}[H]
    \caption{Finds the coefficients of the functions}
    \KwIn{\(x=[x_0,\dots ,x_n]\), \(y=[y_0,\dots, y_m]\), \(z =\left[
        \begin{aligned}
        &z_{0,0},\dots,z_{0,m}\\
        &\vdots\qquad\qquad\vdots\\
        &z_{n,0},\dots, z_{n,m}
    \end{aligned}\right],\)
    \(g =\left[
        \begin{aligned}
        &g_{1,1},\dots,g_{1,m}\\
        &\vdots\qquad\qquad\vdots\\
        &g_{n,1},\dots, g_{n,m}
    \end{aligned}\right].\)   } 
    
    \KwOut{ \(\theta\), 
    \(F=\left[
        \begin{aligned}
        &F_{1,1},\dots, F_{1,m}\\
        &\vdots\qquad\qquad\vdots\\
        &F_{n,1},\dots, F_{n,m}
    \end{aligned}\right]\).}
    \SetKwFunction{Fcoef}{coeff}
    \SetKwProg{Fn}{function}{:}{}
    \Fn{\Fcoef{\(x\), \(y\), \(z\), \(g\)} }
    {\For{\(i\gets 1\) \KwTo \(n\)}{
        \(F(i,j)(1)\gets(x(i)-x(i-1))/(x(n)-x(0));\)

        \(F(i,j)(2)\gets(x(i)*x(n)-x(i-1)*x(0))/(x(n)-x(0));\)

        \For{\(j\gets 1\) \KwTo \(m\)}{
    
        \(p=z(i,j)-g(i,j)*z(n,m);\)

        \(q=z(i-1,j)-g(i,j)*z(0,m);\)

        \(r=z(i,j-1)-g(i,j)*z(n,0);\)

        \(t=z(i-1,j-1)-g(i,j)*z(0,0);\)

        \(F(i,j)(3)\gets(y(j)-y(j-1))/(y(n)-y(0));\)

        \(F(i,j)(4)\gets(y(j)*y(n)-y(j-1)*y(0))/(y(n)-y(0));\)
        
        \(F(i,j)(5) \gets (y(0)*(q-p)-y(m)*(t-r))/((x(n)-x(0))*(y(m)-y(0)));\)

        \(F(i,j)(6)\gets (x(0)*(r-p)-x(n)*(t-q))/((x(n)-x(0))*(y(m)-y(0)));\)

        \(F(i,j)(7)\gets g(i,j);\)

        \(F(i,j)(8)\gets (p-q-r+t)/((x(n)-x(0))*(y(m)-y(0)));\)

        \(F(i,j)(9) \gets (y(0)*(x(0)*p-x(n)*q)-y(m)*(x(0)*r\)
        
        \hspace{180pt}\(-x(n)*t))/((x(n)-x(0))*(y(m)-y(0)));\)
        }
    }
    \KwRet{F};
    }

\end{algorithm}

\begin{algorithm}[H]
    \caption{Calculates \(\theta\), the constants and fixed point associated to each function}

    \KwIn {\(\delta\),  
    \(F=\left[
        \begin{aligned}
        &F_{1,1},\dots, F_{1,m}\\
        &\vdots\qquad\qquad\vdots\\
        &F_{n,1},\dots, F_{n,m}
    \end{aligned}\right]\)}
    \tcc{Where \(F_{1,1},\dots,F_{k,l}\) are arrays containing the coefficients corresponding to each function and \(\delta\) is \(\max\{|x_0|,|x_n|,|y_0|,|y_n|\}\).}
    \KwOut{\(\theta\), \(\gamma=\left[
            \begin{aligned}
            &\gamma_{1,1},\dots, \gamma_{1,m}\\
            &\vdots\qquad\qquad\vdots\\
            &\gamma_{n,1},\dots, \gamma_{n,m}
        \end{aligned}\right]\),
        \(c =\left[
        \begin{aligned}
        &c_{1,1},\dots,c_{1,m}\\
        &\vdots\qquad\qquad\vdots\\
        &c_{n,1},\dots, c_{n,m}.
    \end{aligned}\right]\)}
    \tcc{Where \(c_{1,1},\dots,c_{k,l}\) are the constants corresponding to each function \(F_{1,1},\dots,F_{k,l}\), \(\gamma\) contains their fixed points and \(\theta\) is the constant considered in the metric.}
    \SetKwProg{Fn}{function}{:}{}

    \SetKwFunction{Ftht}{theta}
        \Fn{\Ftht{\(F\), \(\delta\)}}{
            \For{\(i\gets 1\) \KwTo \(n\) and \(j\gets 1\) \KwTo \(m\)}{  
            [\(a,b,c,d,e,f,g,\alpha,\beta]\gets\) F(i, j);
            
            \(max_a \gets \max(max_a,a)\);

            \(max_c\gets \max(max_c,c)\);

            \(max_{1}\gets \max(max_1,|e|+\delta*|\alpha|)\);

            \(max_{2}\gets \max(max_2,|f|+\delta*|\alpha|)\);
            }
            \(\theta_1=\theta_2=1\)

            \If{\(max_1\neq 0\)}{\(\theta_1 \gets(1-max_a)/(2*max_1) \)}
            \If{\(max_2\neq 0\)}{\(\theta_2\gets(1-max_c)/(2*max_2) \)}
            
            \(\theta = \min(\theta_1,\theta_2);\)

            \KwRet{\(\theta\);}
            }

    \SetKwFunction{Ffp}{fixPT}
    \Fn{\Ffp{\(F\)}}{
        \For{\(i\gets 1\) \KwTo \(n\) and \(j\gets 1\) \KwTo \(m\)}{
        \([a,b,c,d,e,f,g,\alpha,\beta]\gets F(i,j);\)

        \(\gamma(i,j)\gets[b/(1-a), d/(1-c),(1/(1-g(i,j)))*(e*b/(1-a) \)
        
        \hspace{80pt}\(+f*d/(1-c) +\alpha*b*d/((1-a)*(1-c)) +\beta)];\)}
       
        \KwRet{\(\gamma\);}
        }
        \SetKwFunction{Flip}{const}
        \Fn{\Flip{\(F\), \(\theta\)}}{
            \For{\(i\gets 1\) \KwTo \(n\) and \(j\gets 1\) \KwTo \(m\)}{
            \([a,b,c,d,e,\alpha,\beta]\gets F(i,j);\)
    
            \(c(i,j)\gets \max(a+\theta*(|e|+\delta|\alpha|),c+\theta*(|f|+\delta|\alpha|),g(i,j)));\)}
           
            \KwRet{\(\theta\);}
            }     
\end{algorithm}
\begin{algorithm}[H]
    \caption{Calculates coefficients of the composed functions and their corresponding constants}
    \KwIn{\(F_0=\left[
        \begin{aligned}
        &F_{1,1},\dots, F_{1,m}\\
        &\vdots\qquad\qquad\vdots\\
        &F_{n,1},\dots, F_{n,m}
    \end{aligned}\right]\), \(c_0 =\left[
        \begin{aligned}
        &c_{1,1},\dots,c_{1,m}\\
        &\vdots\qquad\qquad\vdots\\
        &c_{n,1},\dots, c_{n,m}.
    \end{aligned}\right]\), \(order\)}
    
    \KwOut{
        \(F_n,c_n\)}
    \SetKwProg{Fn}{function}{:}{}

    \SetKwFunction{Fcomp}{comp2}
        \Fn{\Fcomp{\([a_1, b_1, c_1, d_1, e_1, f_1, g_1, \alpha_1, \beta_1]\), \([a_2, b_2, c_2, d_2, e_2, f_2, g_2, \alpha_2, \beta_2]\)}}{
            \(a\gets a_1*a_2\);

            \(b\gets a_1*b_2+b_1;\)
            
            \(c\gets c_1* c_2;\)

            \(d\gets c_1*d_2+d_1;\)

            \(e\gets e_1*a_2+g_1*e_2+\alpha_1*a_2*d_2;\)

            \(f\gets f_1*c_2+g_1*f_2+\alpha_1*b_2*c_2;\)

            \(g\gets g_1*g_2;\)

            \(\alpha\gets \alpha_1*a_2*c_2+g_1*\alpha_2 ;\)

            \(\beta\gets e_1*b_2+f_1*d_2+\alpha_1*b_2*d_2+g_1*\beta_2+\beta_1;\)

            \KwRet{\([a,b,c,d,e,f,g,\alpha,\beta]\);}
        }

        \SetKwFunction{Fcomps}{sisComp}
        \Fn{\Fcomps{\(F_o\), \(F_n\), \(c_o\), \(c_n\)}}{
            
            \(index_1\gets 0;\)

            \For{\(i\gets 1\) \KwTo \(n\) and \(j\gets 1\) \KwTo \(m\)}{
                \(index_1\gets index_1+1;\)

                \(index_2\gets0;\)

            \For{\(k\gets 1\) \KwTo \(n'\) and \(l\gets 1\) \KwTo \(m'\)}{
                 \(index_2\gets index_2+1;\)

                 \(F(index_1,index_2)=comp2(F_o(i,j),F_n(k,l));\)

                 \(c(index_1,index_2)=c_o(i,j)*c_n(k,l);\)
            }
            }
            \KwRet{\(F,c\);}
        }
        \SetKwFunction{Ford}{sisCompOrd}
        \Fn{\Ford{\(F_o\),\(c_o\),order}}{
            
        \If{order\(\geq 2\)}{\([F_n,c_n]\gets\)sisComp(\(F_o,F_o,c_o,c_o\));
        
        \For{\(i\gets 3\) \KwTo order}{
            \([F_n,c_n]\gets\)sisComp(\(F_o,F_n,c_o,c_n\));
        }
        }
        \Else{\KwRet{\(F_o,c_o\)};}
            \KwRet{\(F_n,c_n\);}
        }
        
    \end{algorithm}
\begin{algorithm}[H]
    \caption{Finds the radii and vertices of the covering}
    \KwIn{\(k,l \geq 2,\;c =\left[
        \begin{aligned}
        &c_{1,1},\dots,c_{1,l}\\
        &\vdots\qquad\qquad\vdots\\
        &c_{k,1},\dots, c_{k,l}
    \end{aligned}\right],\;
    \gamma=\left[
        \begin{aligned}
        &\gamma_{1,1},\dots, \gamma_{1,l}\\
        &\vdots\qquad\qquad\vdots\\
        &\gamma_{k,1},\dots, \gamma_{k,l}
    \end{aligned}\right],\; M \).   } 
    \tcc{Where \(c_{1,1},\dots,c_{k,l}\) are the constants corresponding to each function \(F_{1,1},\dots,F_{k,l}\), \(\gamma\) contains the fixed points and \(M\) is 
    \(\max_{\ovset{i,p\in\{1,\dots,k\}}{l,q\in\{1,\dots,l\}}}\rho(\gamma_{i,j},\gamma_{k,l})\).}
    
    \KwOut{\(r = \left[
        \begin{aligned}
        &r_{1,1},\dots,r_{1,l}\\
        &\vdots\qquad\qquad\vdots\\
        &r_{k,1},\dots, r_{k,l}
    \end{aligned}\right]\), \(O = \left[
        \begin{aligned}
        &o_{1,1},\dots,o_{1,l}\\
        &\vdots\qquad\qquad\vdots\\
        &o_{k,1},\dots,o_{k,l}
    \end{aligned}\right]\)}
    \tcc{\(r\) contains the radii and \(O\) is a matrix in which each element \(o_{i,j}\) contains the eight vertices that define the  octahedron corresponding to each function in the system.
    }

    \SetKwFunction{Fradi}{radii}
    \SetKwProg{Fn}{function}{:}{}
    \Fn{\Fradi{\(k\), \(l\), \(c\), \(M\)} }
    {

    \(max_c=[0,0];\)
    
    \(ind_1=[0,0];\)
    
    \(ind_2=[0,0];\)

    \For{\(i\gets 1\) \KwTo \(k\) and \(j\gets 1\) \KwTo \(l\)}{
        \If{$max_c(1)\leq c(i,j)$}{
            \(max_c(2)\gets max_c(1)\)\;
            \(ind_2\gets ind_1\)\;
            \(max_c(1)\gets c(i,j)\)\;
            \(ind_1\gets [i,j]\)\;
        }
    }
    \(r(ind_1)\gets c(ind_1)* M*(1+c(ind_2))/(1-c(ind_2)*c(ind_1)) \)\;
    \For{\(i\gets 1\) \KwTo \(k\) and \(j\gets 1\) \KwTo \(l\), \([i,j]\neq ind_1\)}{
        \(r(i,j)\gets c(i,j)* M*(1+c(ind_1))/(1-c(ind_2)*c(ind_1)) \)\;
    }
    \KwRet{r};
    }

    \SetKwFunction{Fvert}{vertices}
    \SetKwProg{Fn}{function}{:}{}
    \Fn{\Fvert{\(k\), \(l\), \(\gamma\), \(r\), \(\theta\)} }
    {
        \For{\(i\gets 1\) \KwTo \(k\) and \(j\gets 1\) \KwTo \(l\)}{
            \(O(i,j)\gets\)\([\gamma(i,j)+[r(i,j),0,0]\), \(\gamma(i,j)-[r(i,j),0,0]\),
                        
            \Indp\Indp\Indp\(\gamma(i,j)+[0,r(i,j),0]\), \(\gamma(i,j)-[0,r(i,j),0]\),
                    
            \(\gamma(i,j)+[0,0,r(i,j)/\theta]\), \(\gamma(i,j)-[0,0,r(i,j)/\theta]];\)
            
            \tcc{Array addition is done element by element}
        }
    \KwRet{O};

    }
    \end{algorithm}

\makeatletter
\def\verbatim@font{\normalfont}
\makeatother
\begin{verbatim}
Bogdan-Cristian Anghelina
Faculty of Mathematics and Computer Science
Transilvania University of Brașov
Iuliu Maniu Street, nr. 50, 500091, Brașov, Romania
E-mail: bogdan.anghelina@unitbv.ro

Radu Miculescu
Faculty of Mathematics and Computer Science
Transilvania University of Brașov
Iuliu Maniu Street, nr. 50, 500091, Brașov, Romania
E-mail: radu.miculescu@unitbv.ro
\end{verbatim}
%




\end{document}